\newcommand{\SF}{\mathcal{SF}} 
\newcommand{\N}{\mathbb{N}}
\newcommand{\R}{\mathbb{R}}
\newcommand{\Z}{\mathbb{Z}}
\newcommand{\eps}{\varepsilon}
\numberwithin{equation}{section}
\newtheorem{theorem}{Theorem}
\newtheorem{lemma}[theorem]{Lemma}
\newtheorem{proposition}[theorem]{Proposition}
\theoremstyle{definition}
\newtheorem{definition}{Definition}
\newtheorem{remark}[theorem]{Remark}
\begin{document}

\title{Growth rates of sequences governed by the squarefree properties of their translates}

\author[Wouter van Doorn]{Wouter van Doorn}
\address{Wouter van Doorn, Groningen, the Netherlands} 
\email{wonterman1@hotmail.com}

\author[Terence Tao]{Terence Tao}
\address{Terence Tao, Department of Mathematics, UCLA, 405 Hilgard Ave, Los Angeles, USA}
\email{tao@math.ucla.edu}

\subjclass[2020]{Primary 11B05; 
Secondary 11B50, 
11N25} 


\keywords{Squarefree integers, translates, dense sequences}

\begin{abstract} We answer several questions of Erd\H{o}s regarding sequences of natural numbers $A$ whose translates $n+A$ intersect with the squarefree numbers in various specified ways. For instance, we show that if every translate only contains finitely many squarefree numbers, then $A$ has zero density, although the decay rate of this density can be arbitrarily slow. On the other hand, there exist sequences $A$ with optimal density $6/\pi^2$ for which infinitely many $n$ exist such that $n+a$ is squarefree for all $a \in A$ with $a < n$. In fact, infinitely many such $n$ exist for every exponentially increasing sequence, as long as the sequence avoids at least one residue class modulo $p^2$ for all primes $p$, a property we call admissible. If one instead requires infinitely many $n$ to exist such that $n+a$ is squarefree for all $a \in A$, then $A$ can have density arbitrarily close to, but not equal to, $6/\pi^2$. Finally, we prove bounds on the growth rate of sequences $A$ for which $a+a'$ is squarefree for all $a,a' \in A$, as well as bounds on the largest admissible subset of $\{1, 2, \ldots, N\}$. 
\end{abstract}

\maketitle


\section{Introduction} \label{intro}
In $1981$, Erd\H{o}s \cite{ER81} stated various unconventional questions on additive and multiplicative number theory. In particular, he asked about the growth rates of sequences of positive integers whose translates are constrained by various properties concerning the squarefree natural numbers
$$ \SF \coloneqq \N \backslash \bigcup_p p^2 \N = \{1,2,3,5,6,7,10,\dots\} \quad (\href{https://oeis.org/A005117}{\text{OEIS A005117}}).$$
As is well known, by Euler's solution to the Basel problem this set has natural density
\begin{equation}\label{basel}
 \prod_p \left(1 - \frac{1}{p^2} \right) = \frac{1}{\zeta(2)} = \frac{6}{\pi^2} \quad (\href{https://oeis.org/A059956}{\text{OEIS A059956}}),
\end{equation}
and it is not hard to show\footnote{See Section \ref{notation-sec} for our notation conventions.}
$$|\SF \cap [x]| = \frac{6}{\pi^2} x + O(\sqrt{x}) \quad (\href{https://oeis.org/A013928}{\text{OEIS A013928}}).$$
In fact, this error term can be improved, and one has
\begin{equation}\label{sfx}
|\SF \cap [x]| = \frac{6}{\pi^2} x + O\left(\frac{\sqrt{x}}{\exp(c \log^{3/5} x / (\log\log x)^{1/5} )}\right)
\end{equation}
for all $x \geq 3$ and some absolute constant $c>0$; see \cite{walfisz}.  One can improve the error term further if one assumes the Riemann hypothesis: see \cite{liu} (or \cite[Problem 969]{EP}) for the most recent results in this direction.

Now, for each positive integer $n$, the translate $\SF - n$ denotes those integers $a$ for which $n+a$ is squarefree.  We recall the following four definitions from \cite{ER81}, and add three more that are implicitly used there.

\begin{definition}[Properties $P$, $Q$, $\overline{P}$, $\overline{P}_\infty$, etc.]
Let $A$ be a set or sequence of natural numbers. If we consider $A$ (which can be finite or infinite) to be a sequence, we always assume it to be strictly increasing.  
\begin{itemize}
    \item We say that $A$ has property $P$ if $\SF - n$ has finite intersection with $A$ for every $n \in \N$. That is, for all $n$, one has $n+a$ squarefree for only finitely many $a \in A$.
    \item We say that $A$ has property $Q$ if $\SF - n$ contains $A \cap [n-1]$ for infinitely many $n \in \N$. That is, for infinitely many $n$, one has $n+a$ squarefree for all $a \in A$ with $a<n$.
    \item We say that $A$ has property $\overline{P}$ if $\SF - n$ contains $A$ for infinitely many $n \in \N$. That is, for infinitely many $n$, one has $n+a$ squarefree for all $a \in A$.
    \item We say that $A$ has property $\overline{P}_\infty$ if $A \setminus (\SF-n)$ is finite for infinitely many $n \in \N$. That is, for infinitely many $n$, one has $n+a$ squarefree for all but finitely many $a \in A$.
    \item We say that $A$ has \emph{squarefree sums} if $\SF - a$ contains $A$ for all $a \in A$. That is, $a+a' \in \SF$ for all $a,a' \in A$.
    \item We say that $A$ is \emph{admissible} if $A$ avoids at least one residue class $b_p \pmod{p^2}$ for each prime $p$.  
    \item We say that $A$ is \emph{almost admissible} if for each prime $p$ there is a residue class $b_p \pmod{p^2}$ that intersects only finitely many elements of $A$. 
\end{itemize}
\end{definition}

We remark that all of these properties are downwardly monotone; whenever a set $A$ has any of the above properties, then so does any subset of $A$. The property $P$, being additionally closed under finite unions, in fact forms an order ideal. Furthermore, we have the following easy implications (summarized in Figures \ref{implications}, \ref{implications-fin}), most of which are already implicit in \cite{ER81}:
\begin{itemize}
    \item Every infinite set with squarefree sums has property $\overline{P}$.
    \item Every set with property $\overline{P}$ has properties $\overline{P}_{\infty}$ and $Q$ as well.
    \item No infinite set with property $P$ obeys $\overline{P}_\infty$ (let alone $\overline{P}$).
    \item Every admissible set is almost admissible.
    \item Every set with property $Q$ is admissible. 
    \item Every set with property $\overline{P}_\infty$ is almost admissible.
    \item Properties $P$ and $\overline{P}_\infty$, as well as the almost admissible property, are unaffected by the addition or deletion of a finite number of elements.  In particular, finite sets obey all three properties.
    \item Finite sets obey $\overline{P}$ (hence $Q$) if and only if they are admissible.
\end{itemize}

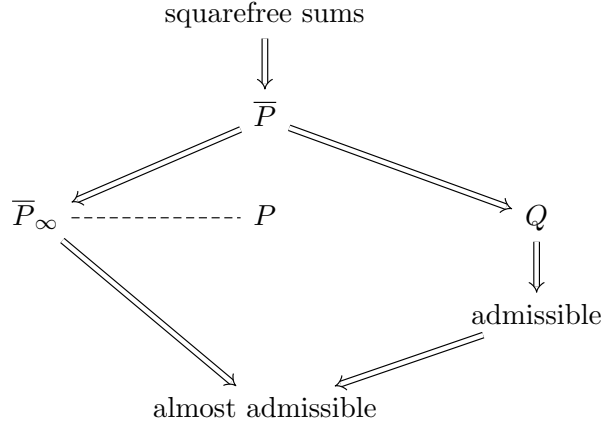
\begin{figure}
\begin{center}
\begin{tikzcd}
    &\text{squarefree sums} \arrow[d, Rightarrow]  \\ 
    &\overline{P} \arrow[dl, Rightarrow] \arrow[dr, Rightarrow]  \\
    \overline{P}_\infty  \arrow[ddr, Rightarrow] \arrow[r, dash, dashed]&P & Q \arrow[d, Rightarrow]\\
     && \text{admissible} \arrow[dl, Rightarrow] \\
    &\text{almost admissible}
\end{tikzcd}
\end{center}
\caption{Implications between various properties for infinite sets $A$. The dashed line indicates the incompatibility of $P$ and $\overline{P}_\infty$ in the infinite case.  Note how $P$ stands in opposition to the other six properties, which are tied to each other by various implications.} \label{implications}
\end{figure}

\begin{figure}
\begin{center}
\begin{tikzcd}
    \text{squarefree sums} \arrow[d, Rightarrow]\\
    \text{admissible} \arrow[d, Rightarrow] \arrow[r, Leftrightarrow] & Q\arrow[r, Leftrightarrow] & \overline{P}  \\
    \text{true} \arrow[r, Leftrightarrow] & \text{almost admissible} \arrow[r, Leftrightarrow] & P \arrow[r, Leftrightarrow] &  \overline{P}_\infty
\end{tikzcd}
\end{center}
\caption{Implications and equivalences between various properties for finite sets $A$.} \label{implications-fin}
\end{figure}
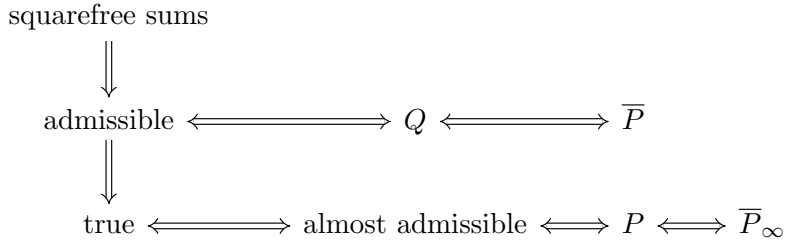

Erd\H{o}s also listed the specific sequences
\begin{align*}
    A_1 &\coloneqq \{ 2^j + 1 : j \in \N \} = \{ 3, 5, 9, 17, 33, \dots\} \quad (\href{https://oeis.org/A000051}{\text{OEIS A000051}})\\
    A_2 &\coloneqq \{ 2^j - 1 : j \in \N \} = \{ 1, 3, 7, 15, 31, \dots\} \quad (\href{https://oeis.org/A000225}{\text{OEIS A000225}})\\
    A_3 &\coloneqq \{ j! + 1 : j \in \N \} = \{ 2, 3, 7, 25, 121, \dots\} \quad (\href{https://oeis.org/A038507}{\text{OEIS A038507}}) \\
    A_4 &\coloneqq \{ j! - 1 : j > 1 \} = \{ 1, 5, 23, 119, 719, \dots\} \quad (\href{https://oeis.org/A033312}{\text{OEIS A033312}})
\end{align*}
as test cases for the above properties.

\subsection{Property \texorpdfstring{$P$}{P}}

Regarding property $P$, Erd\H{o}s writes
\begin{quote}
\emph{Probably a sequence having property $P$ must increase fairly fast, but I have no results in this direction.} \cite[p.\,179]{ER81}
\end{quote}

Contrary to Erd\H{o}s' expectations, sequences with Property $P$ do not necessarily have to grow very quickly.  In Section \ref{P-sec} we show 

\begin{theorem} \label{thmP1}\
\begin{itemize}
    \item[(i)] If $A = \{a_1 < a_2 < \ldots\}$ has property $P$, then $A$ has natural density $0$. That is, the ratio $\frac{a_j}{j}$ goes to infinity with $j$.
    \item[(ii)] Conversely, for any function $f \colon \N \rightarrow \N$ that goes to infinity, there exists an infinite sequence $\{a_1 < a_2 < \ldots\}$ with property $P$ for which $\frac{a_j}{j} \le f(j)$ holds for all $j \in \mathbb{N}$.
\end{itemize}
\end{theorem}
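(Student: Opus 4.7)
The plan is to prove (i) by reducing to a density-decay estimate for intervals containing no squarefree numbers, and to prove (ii) by an inductive construction based on the Chinese Remainder Theorem.

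For part (i), I assume $A$ has property $P$. For each $N \in \N$ the quantity $C_N \coloneqq \max_{1 \le n \le N}\max\{a \in A : n+a \in \SF\}$ is a maximum of finitely many finite sets, hence finite; thus every $a \in A$ with $a > C_N$ satisfies $[a+1,a+N] \cap \SF = \emptyset$. Setting $G_N \coloneqq \{a \in \N : [a+1,a+N] \cap \SF = \emptyset\}$, the upper density of $A$ is at most that of $G_N$ for every $N$, so it suffices to prove that the upper density of $G_N$ tends to $0$ as $N \to \infty$.

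To bound the density of $G_N$: for $a \in G_N$ and each $i \in [1,N]$, fix a prime $p$ with $p^2 \mid a+i$. The primes $p \le \sqrt N$ can account for at most $\sum_{p \le \sqrt N}(\lfloor N/p^2\rfloor + 1) \le N\sum_p p^{-2} + \pi(\sqrt N)$ indices, counted with multiplicity. Since $\sum_p p^{-2} = 0.4522\ldots < 1/2$, this quantity is $(1/2-c)N + o(N)$ for some fixed $c>0$. Hence at least $cN - o(N)$ indices must be covered by primes $p > \sqrt N$, each contributing exactly one $i$ (as $p^2>N$), and so for every $a \in G_N$ the quantity $X(a) \coloneqq |\{p > \sqrt N : p^2 \mid a+i \text{ for some } i \in [1,N]\}|$ is at least $cN/2$ once $N$ is large. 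A direct union-bound computation yields $\sum_{a=1}^M X(a) \le NM\sum_{p > \sqrt N} p^{-2} + o(M)$, and Markov's inequality then bounds the upper density of $G_N$ by $O(\sum_{p > \sqrt N} p^{-2})$, which tends to $0$ as $N \to \infty$.

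For part (ii), let $p_1 < p_2 < \ldots$ enumerate the primes in increasing order, and WLOG assume $f$ is nondecreasing. For each $k \ge 1$ define $N_k$ to be the largest integer $\ge 0$ with $d_k \coloneqq \prod_{i=1}^{N_k} p_i^2 \le f(k)$ (with $d_k = 1$ when $N_k = 0$); since $f(k) \to \infty$, we have $N_k \to \infty$. By the Chinese Remainder Theorem, $\Lambda_k \coloneqq \{a \in \N : p_i^2 \mid a+i \text{ for all } 1 \le i \le N_k\}$ is an arithmetic progression of common difference $d_k$. Setting $a_0 \coloneqq 0$, I inductively pick $a_k$ from the nonempty set $\Lambda_k \cap (a_{k-1}, a_{k-1}+f(k)]$ (nonempty since the interval has length $f(k) \ge d_k$). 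The induction yields $a_k \le \sum_{i=1}^k f(i) \le k\,f(k)$, the required growth bound. To verify property $P$: for any fixed $n \ge 1$, whenever $N_k \ge n$ we have $p_n^2 \mid a_k + n$, so $n + a_k \notin \SF$; since $N_k \to \infty$, only finitely many $k$ are exceptional, and hence $\{a \in A : n+a \in \SF\}$ is finite.

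The main obstacle is the density-decay lemma for $G_N$ in part (i): the reduction to $G_N$ and the CRT construction in (ii) are routine once the lemma is in hand, but the lemma itself is quantitative and depends crucially on the elementary inequality $\sum_p p^{-2} < 1/2$, which is precisely what makes the small/large prime dichotomy quantitatively useful.
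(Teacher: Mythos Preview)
Your proof is correct. Part (ii) is essentially identical to the paper's argument: both inductively choose $a_j$ via the Chinese remainder theorem so that $a_j \equiv -i \pmod{p_i^2}$ for all $i$ up to a threshold growing with $j$, with the threshold chosen so that the modulus stays below $f(j)$.

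For part (i) you take a genuinely different route from the paper. The paper argues by a Maier-matrix double count: assuming $A$ has positive upper density $\delta$, it directly lower-bounds the number of pairs $(a,h)\in (A\cap[N])\times[H]$ with $a+h\in\SF$ by $\gg \delta NH$ via standard sieving, and then pigeonholes on $h$. You instead first reduce to bounding the upper density of $G_N=\{a:[a+1,a+N]\cap\SF=\emptyset\}$, and then bound that density by a small-prime/large-prime dichotomy: the numerical inequality $\sum_p p^{-2}<\tfrac12$ forces at least $cN$ of the indices in any such interval to be hit by primes $p>\sqrt{N}$, and a first-moment/Markov argument on the count of such large primes then gives $\operatorname{dens}(G_N)=O\bigl(\sum_{p>\sqrt N}p^{-2}\bigr)\to 0$. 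Quantitatively the two approaches land in the same place (both yield $n\ll 1/(\delta^2\log^2(1/\delta))$ in the paper's remark). The paper's argument is slightly more robust in that it only needs the squarefrees to have positive density, not the specific inequality $\sum_p p^{-2}<\tfrac12$; yours is arguably more elementary and makes the reduction to the gap set $G_N$ explicit.
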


\subsection{Property \texorpdfstring{$Q$}{Q}}

By standard sieve theory and \eqref{basel}, every admissible (or almost admissible) sequence has upper density at most $\frac{6}{\pi^2}$.  Since sequences with property $Q$ are automatically admissible, we conclude

\begin{theorem}[Upper bound]\label{thmQ1-upper}
Every sequence with property $Q$ has upper density at most $\frac{6}{\pi^2}$. 
\end{theorem}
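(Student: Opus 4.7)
The plan is to reduce the statement to the observation, already flagged in the bullet list of easy implications, that every set with property $Q$ is admissible, and then bound the density of admissible sets by a standard inclusion--exclusion/sieve argument.

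First I would verify the implication $Q \Rightarrow \text{admissible}$: if $A$ has property $Q$, then for some $n \in \N$ (in fact infinitely many, but one suffices), every element of $A \cap [n-1]$ lies in $\SF - n$. If $A$ were to meet every residue class modulo $p^2$ for some prime $p$, then choosing an element $a \in A$ with $a \equiv -n \pmod{p^2}$ and $a < n$ (possible once $n$ is large enough, since $Q$ provides arbitrarily large such $n$) would force $p^2 \mid n+a$, contradicting $n+a \in \SF$. Hence for each prime $p$ there is a residue class $b_p \pmod{p^2}$ avoided by $A$, i.e.\ $A$ is admissible.

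Next I would bound the upper density of any admissible set. Fix a parameter $P$ and let $A_P \coloneqq \{m \in \N : m \not\equiv b_p \pmod{p^2} \text{ for all primes } p \le P\}$, so that $A \subseteq A_P$. By the Chinese Remainder Theorem applied to the moduli $p^2$ for $p \le P$, the set $A_P$ is a union of residue classes modulo $Q_P \coloneqq \prod_{p \le P} p^2$, of total size $\prod_{p \le P}(p^2 - 1)$. Consequently $A_P$ has natural density $\prod_{p \le P}(1 - 1/p^2)$, and therefore
\[
\overline{d}(A) \;\le\; \overline{d}(A_P) \;=\; \prod_{p \le P}\left(1 - \frac{1}{p^2}\right).
\]
Letting $P \to \infty$ and invoking \eqref{basel} yields $\overline{d}(A) \le 6/\pi^2$.

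There is no real obstacle here; the only mild subtlety is to observe explicitly that property $Q$ (rather than the stronger $\overline{P}$) still suffices to force admissibility, which is why I made that step explicit above. Everything else is the standard CRT+Euler-product sieve computation, and the same proof gives the stated bound for almost admissible sets, since finitely many sporadic hits on the forbidden residue classes $b_p$ do not affect the upper density.
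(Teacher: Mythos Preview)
Your proposal is correct and follows exactly the route the paper takes: the paper deduces the theorem by observing (in the bullet list of easy implications) that property $Q$ forces admissibility, and then citing ``standard sieve theory and \eqref{basel}'' to bound the upper density of any admissible set by $6/\pi^2$. You have simply written out both of these steps in detail, and the one subtlety you flagged---that one needs the infinitely many $n$ from property $Q$ to ensure a witness $a<n$ in the bad residue class---is handled correctly.
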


Erd\H{o}s writes

\begin{quote}
\emph{It is easy to see that if $A$ increases sufficiently fast [and is admissible] then it has property $Q$ and in fact there is an $n$, $a_k < n < a_{k+1}$\footnote{Erd\H{o}s actually writes $n_{7k+1}$ instead of $a_{k+1}$, but we assume that this is simply a typographical error.} for which $n+a_i$, $i=1,\dots,k$, is always squarefree. I have no precise information about the rate of increase a sequence having property $Q$ must have.} \cite[p.\,179]{ER81}
\end{quote}

Perhaps surprisingly, the bound in \Cref{thmQ1-upper} is sharp; in Section \ref{Q-sec} we will show

\begin{theorem}[Lower bound]\label{thmQ1-lower}
There exists an infinite sequence $A = \{a_1 < a_2 < \dots\} \subset \SF$ with property $Q$ which has natural density $\frac{6}{\pi^2}$.  Equivalently, one has $\frac{a_j}{j} \to \frac{\pi^2}{6}$ as $j \to \infty$.
\end{theorem}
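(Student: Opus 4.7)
The strategy is a greedy construction. Fix a slowly increasing sequence $y_k \to \infty$ (for instance $y_k = k$) and set $M_k := \prod_{p \le y_k} p^2$. I build in parallel a rapidly growing sequence of witnesses $n_1 < n_2 < \ldots$ and a nested sequence of finite sets $\emptyset = A_0 \subset A_1 \subset \cdots \subset \SF$, with $A_k \subset [n_k-1]$, subject to three requirements at each stage $k \ge 1$: (a) $M_k \mid n_k$, so $p^2 \mid n_k$ for every $p \le y_k$; (b) $n_k + a \in \SF$ for every $a \in A_{k-1}$; (c) $A_k = A_{k-1} \cup \{a \in \SF \cap (n_{k-1}, n_k) : n_k + a \in \SF\}$. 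Taking $A := \bigcup_k A_k \subset \SF$, all additions at later stages land above $n_k$, so $A \cap [n_k-1] = A_k$, and each $n_k$ witnesses property $Q$ (using (b) for the old elements and (c) for the new ones).

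\emph{Feasibility of $n_k$.} Given $A_{k-1}$ finite and contained in $\SF$, I need to produce $n_k \in M_k\Z$ arbitrarily large satisfying (b). For $p \le y_k$ the condition $p^2 \mid M_k \mid n_k$ combined with $a \in \SF$ automatically yields $p^2 \nmid n_k + a$. For $p > y_k$, requirement (b) forbids $n_k$ from finitely many residue classes $-a \pmod{p^2}$, $a \in A_{k-1}$; the total bad density along $M_k\Z$ is at most $|A_{k-1}| \sum_{p > y_k} 1/p^2 = O\bigl(|A_{k-1}|/(y_k \log y_k)\bigr)$, so once $y_k$ is large relative to $|A_{k-1}|$ the set of valid $n_k$ has positive density in $M_k\Z$ and I may pick one as large as I please.

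\emph{Density estimate.} Set $B_k := \{a \in \SF \cap (n_{k-1}, n_k) : n_k + a \in \SF\}$. A standard shifted-squarefree sieve (an inclusion--exclusion over pairs $d^2 \mid a$ and $e^2 \mid n_k + a$, parallel to the derivation of $|\SF \cap [x]| = \tfrac{6}{\pi^2}x + O(\sqrt{x})$ in the introduction) gives
\[
|B_k| \;=\; (n_k - n_{k-1})\,\rho(n_k) \;+\; o(n_k),
\]
where $\rho(n) := \prod_p (1 - c_p(n)/p^2)$ with $c_p(n) := |\{0, -n \bmod p^2\}|$. Requirement (a) forces $c_p(n_k) = 1$ for $p \le y_k$, while $c_p(n_k) \le 2$ always, so
\[
\rho(n_k) \;\ge\; \frac{6}{\pi^2} \prod_{p > y_k} \frac{1 - 2/p^2}{1 - 1/p^2} \;=\; \frac{6}{\pi^2}\bigl(1 - O(1/(y_k \log y_k))\bigr) \;\longrightarrow\; \frac{6}{\pi^2}.
\]
Choosing each $n_k$ so that $n_{k-1}/n_k \to 0$ yields $|A_k|/n_k \to 6/\pi^2$. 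For intermediate $N \in (n_k, n_{k+1})$, the same sieve applied with witness $n_{k+1}$ controls $|A \cap (n_k, N]|$, so $|A \cap [N]|/N \to 6/\pi^2$ along the whole sequence. The matching upper bound is \Cref{thmQ1-upper}, which forces equality and is equivalent to $a_j/j \to \pi^2/6$.

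\emph{Main obstacle.} The hard step is the shifted-squarefree sieve: pushing the error below the main term $(6/\pi^2)(n_k - n_{k-1})$ is a two-variable analogue of the classical squarefree count in the introduction and requires careful truncation of the inclusion--exclusion. A secondary quantitative issue is that $\rho(n_k)$ approaches $6/\pi^2$ only at the slow rate $\sim 1/(y_k \log y_k)$, so $y_k$ has to be balanced: large enough to drive $1 - \rho(n_k) \to 0$, yet small enough (compared to $|A_{k-1}|$) that the feasibility argument at step (b) still produces a valid $n_k$ in $M_k \Z$.
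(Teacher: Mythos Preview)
Your construction---witnesses $n_1<n_2<\cdots$ with $M_k\mid n_k$, and $A$ built as the union of the blocks $B_k=\{a\in\SF\cap(n_{k-1},n_k):n_k+a\in\SF\}$---is exactly the skeleton the paper uses, and your feasibility argument for (b) as well as the estimate $|B_k|=(n_k-n_{k-1})\rho(n_k)+o(n_k)$ are both fine.

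The gap is the sentence ``For intermediate $N\in(n_k,n_{k+1})$, the same sieve applied with witness $n_{k+1}$ controls $|A\cap(n_k,N]|$.'' The shifted sieve for $\{a\in(n_k,N]:a\in\SF,\ n_{k+1}+a\in\SF\}$ must treat primes $p$ all the way up to $\sqrt{2n_{k+1}}$, not just $\sqrt{N}$: for each prime $\sqrt{N}<p\le\sqrt{2n_{k+1}}$ there can be one $a$ in the short interval with $p^2\mid n_{k+1}+a$, and there are $\asymp\sqrt{n_{k+1}}/\log n_{k+1}$ such primes. Meanwhile your feasibility step forces $y_k\log y_k\gg |A_{k-1}|\asymp n_{k-1}$, hence $n_k\ge M_k\gg\exp(c\,n_{k-1})$; with this super-exponential growth, $\sqrt{n_{k+1}}$ dwarfs every $N$ anywhere near $n_k$, and the deterministic sieve yields no lower bound at all on that range. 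So the lower-density claim is unproved for $n_k<N\ll\sqrt{n_{k+1}}$, which is almost the entire interval.

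The paper closes this gap by building the uniform control over scales into the \emph{choice} of $n_{k+1}$. It picks $n_{k+1}$ uniformly at random among multiples of $M_{k+1}$ in $[x/2,x]$ and proves (condition~(ii) of Proposition~\ref{key}) that with probability $1-o(1)$ one has
\[
\bigl|\{a\in\SF\cap[R]:n_{k+1}+a\in\SF\}\bigr|\ \ge\ \Bigl(\tfrac{6}{\pi^2}-O(\eps)\Bigr)R
\qquad\text{for every }n_k\le R\le n_{k+1}
\]
simultaneously. The mechanism is that for a single scale $R$ and a single large prime $p>\sqrt{R}$, the event that $p^2\mid n_{k+1}+a$ for some $a$ in an interval of length $\eps R$ has probability $O(\eps R/p^2+W/x)$; one sums over $p$ and then over the $O(\eps^{-1}\log x)$ scales $R$ running over powers of $1+\eps$, and the total failure probability is small once $n_k$ is large enough. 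Your ``main obstacle'' paragraph correctly senses a tension in the parameters, but the real issue is not the rate at which $\rho(n_k)\to 6/\pi^2$; it is that a deterministic inclusion--exclusion with a fixed shift $n_{k+1}$ simply cannot see scales far below $\sqrt{n_{k+1}}$, and one must \emph{select} $n_{k+1}$ (probabilistically, or by an equivalent averaging argument) so that it is good at every intermediate scale at once.
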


In Section \ref{suff-sec} we also quantify what counts as ``sufficiently fast'' in Erd\H{o}s' claim:

\begin{theorem}[Sufficiently fast sequences]\label{suff}  There is an absolute constant $C$ such that, if $A = \{a_1 < a_2 < \dots\}$ is an admissible sequence with $a_j \geq \exp(C j / \log j)$ for infinitely many $j$, then $A$ has property $Q$.
\end{theorem}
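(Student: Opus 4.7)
The plan is to prove, for each (sufficiently large) index $j$ satisfying $a_j \geq \exp(Cj/\log j)$, the existence of an integer $n \in I_j := [a_j/2, a_j]$ such that $n + a_i \in \SF$ for every $i \leq j-1$. Since $n \leq a_j$ and the $a_i$ are strictly increasing, every $a \in A$ with $a < n$ is some $a_i$ with $i \leq j-1$, so any such $n$ witnesses property $Q$; and since the endpoints of $I_j$ tend to infinity along the infinite family of $j$'s, this yields infinitely many distinct witnesses.

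The construction splits the primes at a threshold $P := C_1 j / \log j$, with $C_1$ an absolute constant to be chosen, handling small primes ($p \leq P$) by CRT and large primes by a union bound. For each $p \leq P$, admissibility supplies a residue class $b_p \pmod{p^2}$ avoided by every element of $A$, so requiring $n \equiv -b_p \pmod{p^2}$ automatically forces $p^2 \nmid n + a_i$ for all $i$. Combining these constraints via CRT selects a single residue class $n \equiv n_0 \pmod{M}$ with $M := \prod_{p \leq P} p^2 = e^{2\theta(P)}$, which intersects $I_j$ in $|I_j|/M + O(1) \gg a_j/M$ points provided $a_j \gg M$.

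For the large primes $P < p \leq \sqrt{2 a_j}$ (beyond which $p^2$ cannot divide $n + a_i \leq 2 a_j$), and each $i \leq j-1$, the joint conditions $n \equiv n_0 \pmod{M}$ and $p^2 \mid n + a_i$ define a single residue class modulo $M p^2$ (since $\gcd(M, p^2) = 1$), and thus cut out at most $|I_j|/(M p^2) + 1$ integers in $I_j$. Summing over $(p, i)$, the total count of ``bad'' $n$ is
$$B \;\ll\; \frac{(j-1)\, a_j}{M} \sum_{p > P} \frac{1}{p^2} \;+\; (j-1)\, \pi(\sqrt{2 a_j}) \;\ll\; \frac{(j-1)\, a_j}{M\, P \log P} \;+\; \frac{(j-1)\sqrt{a_j}}{\log a_j}.$$
For $C_1$ sufficiently large the first summand is a small fraction of $a_j/M$ (by the prime number theorem, $P \log P \sim C_1 j$); for $C$ a sufficiently large multiple of $C_1$, so that $\log a_j$ comfortably dominates $2 \log M + 2 \log j \sim 4 C_1 j/\log j$, the second summand is likewise a small fraction of $a_j/M$. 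The count of good $n \in I_j$ is then at least $|I_j|/M - B - O(1)$, which is strictly positive.

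The main obstacle is the balance of scales: raising $P$ sharpens the tail $\sum_{p > P} p^{-2} \sim 1/(P \log P)$ needed to absorb the factor of $j-1$ from the union bound, but shrinks the CRT-class density to $M^{-1} = e^{-2\theta(P)}$, and the lower bound on $a_j$ must keep pace with $M$ to leave room beyond the additive error $\pi(\sqrt{2 a_j})$ coming from primes near $\sqrt{a_j}$. The hypothesis $a_j \geq \exp(Cj/\log j)$ is exactly the quantitative form of this balance, namely $\log a_j \gtrsim P \sim j/\log j$.
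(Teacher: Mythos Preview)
Your proof is correct and follows essentially the same approach as the paper's: restrict $n$ to the interval $[a_j/2,a_j]$, use admissibility and the Chinese remainder theorem to force $p^2\nmid n+a_i$ for all small primes, and then show by a union bound that the large primes eliminate only a small fraction of the remaining candidates. The only cosmetic differences are that the paper phrases the argument probabilistically (choosing $n$ uniformly at random from the residue class in $[x/2,x]$ and bounding the failure probability via Lemma~\ref{freq}) and sets the small-prime threshold as $\frac{1}{10}\log a_j$ rather than $C_1 j/\log j$; under the hypothesis $\log a_j \ge Cj/\log j$ these are equivalent choices, and the final balancing of scales is identical.
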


In fact, as we will explain in Section \ref{explicit}, one can take $C$ to be any constant larger than $4$. If one then furthermore adds a regularity condition on the growth of $A$, the further claim by Erd\H{o}s that suitable $n$ exist between consecutive elements of $A$ also follows.

\begin{theorem}[Sufficiently fast sequences, alternate version] \label{suff-reg} If $A = \{a_1 < a_2 < \dots\}$ is an admissible sequence with $a_j \geq \max\big(\exp(5j / \log j), a_{j-1} + a_{j-1}^{10/11}\big)$ for all sufficiently large $j$, then for all sufficiently large $j$ there exists an $n \in \mathbb{N}$ with $a_{j-1} < n < a_j$ such that $n + a_i \in \SF$ for all $i < j$. 
\end{theorem}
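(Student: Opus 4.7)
\emph{Plan of proof.} I will run a single-class CRT sieve on the interval $(a_{j-1}, a_j)$, whose length $L := a_j - a_{j-1}$ satisfies $L \geq a_{j-1}^{10/11}$ and thus $\log L \geq (10/11)\log a_j - O(1)$; combined with $\log a_j \geq 5j/\log j$, this is the key quantitative input. Fix a small constant $\eta \in (0, 1/44)$, set $P := (1+\eta) j/\log j$ and $m := \prod_{p \leq P} p^2$, so $\log m = 2(1+\eta) j/\log j + o(j/\log j)$ by the prime number theorem.

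The first step uses admissibility: for each prime $p \leq P$ I fix a residue $b_p \pmod{p^2}$ missed by $A$, and by CRT restrict attention to the unique residue class modulo $m$ with $n \equiv -b_p \pmod{p^2}$ for every $p \leq P$. Any such $n$ automatically satisfies $p^2 \nmid n + a_i$ for all $i < j$ and $p \leq P$, since $n + a_i \equiv a_i - b_p \not\equiv 0 \pmod{p^2}$ by construction. The number of such $n$ in $(a_{j-1}, a_j)$ is at least $L/m - 1$, which serves as the main term.

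The second step bounds the $n$ that remain bad due to a prime $p > P$. For each pair $(p, i)$ with $P < p \leq \sqrt{2a_j}$ and $i < j$, at most $\lceil L/(mp^2) \rceil$ members of our CRT class in the interval satisfy $n + a_i \equiv 0 \pmod{p^2}$. Summing, using $\sum_{p > P} 1/p^2 = O(1/(P\log P))$ (Mertens) and $\pi(\sqrt{2a_j}) = O(\sqrt{a_j}/\log a_j)$ (PNT), gives a total bad count of order $jL/(mP\log P) + j\sqrt{a_j}/\log a_j$. Since $j/(P\log P) = 1/(1+\eta) + o(1)$, the first term is strictly smaller than the main term, so after subtraction one is left with a surplus of order $L/m \cdot \eta/(1+\eta)$ to pit against the ``large prime'' contribution $j\sqrt{a_j}/\log a_j$.

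After taking logarithms, the requirement that this surplus dominates the large-prime contribution reduces to $(10/11 - 1/2) \cdot 5 > 2(1+\eta)$, i.e.\ $45/22 > 2+2\eta$, which holds by our choice of $\eta < 1/44$. Hence main $-$ bad $> 0$ for all sufficiently large $j$, producing the desired $n \in (a_{j-1}, a_j)$. The central obstacle is the tight three-way balance required: $P$ must exceed $j/\log j$ for the medium-prime sum to be absorbed, $m = e^{(2+o(1))P}$ must nevertheless stay far below $L$ (so that the main term $L/m - 1$ is nontrivial), and the remaining $L/m \cdot \eta/(1+\eta)$ must in turn surpass the unavoidable $\sim j\sqrt{a_j}/\log a_j$ coming from primes near $\sqrt{a_j}$. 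The constants $5$ and $10/11$ in the hypotheses are calibrated precisely so that $(9/22)\cdot 5 = 45/22$ exceeds $2$ with a slim margin, just enough to admit a valid $\eta > 0$; this is slightly more stringent than the threshold $C > 4$ for Theorem~\ref{suff}, reflecting the narrower search window $(a_{j-1}, a_j)$ that the regularity forces upon us.
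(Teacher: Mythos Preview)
Your argument is correct and follows essentially the same CRT-sieve-plus-union-bound method that the paper sketches: fix a single residue class modulo $\prod_{p\le P}p^2$ using admissibility, then bound the contribution of primes $p>P$ by splitting into the $L/(mp^2)$ and $+1$ parts, with the constants $5$ and $10/11$ calibrated so that the resulting inequality $45/22>2(1+\eta)$ just holds. The only cosmetic differences are that the paper parameterizes the cutoff as $P=(\tfrac14-\epsilon)\log a_{j-1}$ rather than $(1+\eta)j/\log j$, and works on the subinterval $(a_{j-1},a_{j-1}+a_{j-1}^{10/11})$ rather than the full gap; under the change of variable $\eta=\tfrac14-5\epsilon$ the two constraint windows $\epsilon\in(1/22,1/20)$ and $\eta\in(0,1/44)$ coincide, and your use of $\sum_{p>P}1/p^2$ implicitly requires the asymptotic $(1+o(1))/(P\log P)$ rather than merely the $O$-bound you cite, but this is standard.
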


In the converse direction, we can find rapidly growing admissible sequences that do \emph{not} obey property $Q$:

\begin{theorem}[Fast growing counterexample]\label{counter}  There exists an absolute constant $c > 0$ and an admissible sequence $A = \{a_1 < a_2 < \dots\} \subset \SF$ which does not obey property $Q$, and for which the inequality $a_j \geq \exp(c j^{1/2} / \log^{1/2} j)$ holds for all sufficiently large $j$.
\end{theorem}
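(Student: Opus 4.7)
The plan is to build $A$ as a union of random ``layers'' indexed by primorial scales, and use a covering argument modulo $p^2$ to force property $Q$ to fail. Let $p_k$ denote the $k$-th prime (so $p_k \sim k \log k$) and set $L_K := \prod_{k \leq K} p_k^2$, the square of the $K$-th primorial; then $\log L_K = 2 \sum_{k \leq K} \log p_k \sim 2 p_K$ by the prime number theorem. For each $K \geq 1$ I would pick a layer $A_K$ of $J_K$ squarefree integers drawn uniformly and independently from $\SF \cap (L_{K-1}, L_K]$, with $J_K$ calibrated so that $J_K \asymp K \log^3 K$ and hence $|A_{\leq K}| \asymp K^2 \log^3 K \asymp p_{K+1}^2 \log p_{K+1}$, where $A_{\leq K} := A_1 \cup \cdots \cup A_K$. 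The final sequence $A := \bigcup_K A_K$ is admissible automatically since $A \subset \SF$ avoids the class $0 \pmod{p^2}$ for every $p$.

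The central covering condition to arrange is
\begin{equation*}
(\ast) \qquad \text{for every prime } p \leq p_{K+1} \text{ and every } r \in \{1,\dots,p^2-1\},\ \exists\, a \in A_{\leq K} \text{ with } a \equiv r \pmod{p^2}.
\end{equation*}
Once $(\ast)$ holds at every sufficiently large $K$, property $Q$ fails for $A$. Indeed, for each large $n$ choose $K$ with $L_K \leq n < L_{K+1}$. If $L_K \nmid n$, then some $p \leq p_K$ satisfies $p^2 \nmid n$, so $-n \bmod p^2 \in \{1,\dots,p^2-1\}$, and $(\ast)$ provides an $a \in A_{\leq K} \subset [1, L_K) \subset [1,n)$ with $a \equiv -n \pmod{p^2}$, giving $p^2 \mid n + a$. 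If $L_K \mid n$, write $n = m L_K$ with $1 \leq m \leq p_{K+1}^2$; the prime $p_{K+1}$ satisfies $p_{K+1}^2 \nmid n$ except when $m = p_{K+1}^2$, i.e., $n = L_{K+1}$, in which case the same argument succeeds one level higher. In every case some $a \in A \cap [1, n)$ renders $n + a \notin \SF$.

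To secure $(\ast)$, I would use a standard local-density estimate to argue that a uniform random element of $\SF \cap (L_{K-1}, L_K]$ lies in any prescribed class $r \pmod{p^2}$ with probability $\geq c_0/p^2$ for an absolute $c_0 > 0$, valid for all $p \leq p_{K+1}$ (since $L_K$ dwarfs $p_{K+1}^2$). Independence across layers then gives
\begin{equation*}
\Pr\bigl[(\ast) \text{ fails at level } K\bigr] \leq \sum_{p \leq p_{K+1}} (p^2-1) \exp(-c_0 |A_{\leq K}|/p^2) \leq p_{K+1}^{3 - c_0 C},
\end{equation*}
where $|A_{\leq K}| \geq C p_{K+1}^2 \log p_{K+1}$; for $C$ chosen large enough the bounds are summable over $K$, so by Borel--Cantelli a realization of $A$ satisfies $(\ast)$ at all $K$ outside a finite set, which is harmlessly absorbed into the initial segment of $A$. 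Finally, inverting the size bound $|A \cap [L_K]| \asymp (\log L_K)^2 \log \log L_K$ gives $\log a_j \gtrsim \sqrt{j/\log j}$, yielding the desired lower bound $a_j \geq \exp(c \sqrt{j/\log j})$ for some absolute $c > 0$.

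The principal technical obstacle is the short-interval equidistribution of squarefrees modulo $p^2$ underlying the probabilistic step. However, each interval $(L_{K-1}, L_K]$ has length comparable to $L_K$ and is exponentially larger than any $p_{K+1}^2$ in play, so even an elementary sieve upper bound on $|\SF \cap I \cap (r + p^2 \Z)|$ is more than enough---the refined estimate \eqref{sfx} is not required.
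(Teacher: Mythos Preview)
Your approach shares the paper's core idea: a random subset of $\SF$ is made to cover every nonzero residue class modulo $p^2$ for all primes $p$ up to about $\log x$, which forces any candidate $n$ to be divisible by the square of the relevant primorial and hence too large. The paper randomizes by including each squarefree $n$ independently with probability $\asymp (\log n\log\log n)/n$ and then appeals to a concentration inequality (Bennett) for the growth bound, whereas you fix the layer sizes $J_K$ in advance---making the growth bound deterministic---and only randomize the locations within each layer. Both routes work and give the same exponent.

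One small correction: in the bound $\exp(-c_0|A_{\le K}|/p^2)$ you are implicitly claiming that \emph{every} sample in $A_{\le K}$ lands in a prescribed nonzero class modulo $p^2$ with probability at least $c_0/p^2$, but this fails for the early layers $K'$, whose intervals $(L_{K'-1},L_{K'}]$ are far shorter than $p_{K+1}^2$ (indeed $L_1=4$). Your concluding paragraph about equidistribution only addresses the single layer at level $K$, not the accumulated layers. The repair is immediate: restrict attention to layers with, say, $K'\ge \log K$, for which $L_{K'-1}$ already dwarfs $p_{K+1}^2$; these layers still carry $(1-o(1))|A_{\le K}|$ of the samples, so the exponent survives with only a change of constant and the rest of the argument goes through unchanged.
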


We establish this result in Section \ref{counter-sec}.

\subsection{Squarefree sums}

As observed in \cite[p. 179]{ER81}, a straightforward greedy construction gives a sequence $A = \{a_1 < a_2 < \dots\}$ with squarefree sums. Erd\H{o}s writes, 

\begin{quote}
\emph{In fact one can find a sequence which grows exponentially. Must such a sequence really increase so fast? I do not expect that there is such a sequence of polynomial growth.} \cite[p.\,179]{ER81}
\end{quote}

Let $\mathrm{ES}_N$ denote the size of the largest subset of $[N]$ with squarefree sums. Erd\H{o}s and S\'ark\"ozy \cite{erdos-sarkozy}, S\'ark\"ozy \cite{sarkozy}, Gyarmati \cite{gyarmati} and Konyagin \cite{konyagin} have given upper and lower bounds on this quantity, with the current best bounds of
$$ \log^2 N \log\log N \ll \mathrm{ES}_N \ll N^{11/15} \exp\left( O\left(\frac{\log N}{\sqrt{\log\log N}} \right)\right)$$
for all large $N$ established in \cite{konyagin}.  (For comparison, the large sieve inequality in Theorem \ref{sqsieve} only gives the weaker upper bound $\mathrm{ES}_N \ll N^{3/4}$.)  Inverting the upper bound, we obtain a lower bound
$$ a_j \gg j^{15/11}\exp\left( -O\left(\frac{\log j}{\sqrt{\log\log j}} \right)\right)$$
for all large $j$ and any infinite sequence $A = \{a_1 < a_2 < \dots\}$ with squarefree sums.  It is likely that the constructions in \cite{konyagin} can be modified to generate an infinite sequence $A = \{a_1 < a_2 < \dots\}$ with squarefree sums such that
$$ \log^2 N \log\log N \ll |A \cap [N]|$$
for all large $N$, although we will not do so here.  Assuming that this is the case, such a sequence would obey the upper bound
$$ a_j \ll \exp( O( j^{1/2} / \log^{1/2} j) ).$$
We remark that a modification of the construction in Theorem \ref{suff} gives an infinite sequence $A = \{a_1 < a_2 < \dots\}$ for which the weaker upper bound
$$ a_j \ll \exp( O(j / \log j) )$$
holds. We refer the reader to the first version of this paper\footnote{\url{https://arxiv.org/abs/2512.01087v1}} for details.

We also note that the opposite problem of studying sets with very few (or no) squarefree pairwise sums was considered in \cite{nathanson}, \cite{filaseta}, \cite{lacampagne}, \cite{schoen}.

\begin{remark}
Erd\H{o}s also writes

\begin{quote}
\emph{Is there a sequence of integers $1 \leq a_1 < a_2 < \dots$ so that for every $i$, $a_i \equiv t \pmod{p^2}$ implies $1 \leq t < p^2/2$? If such a sequence exists then clearly $a_i+a_j$ is always squarefree, but I am doubtful if such a sequence exists.  I formulated this problem while writing these lines and must ask the indulgence of the reader if it turns out to be trivial.} \cite[p.\,179-180]{ER81}
\end{quote}

Indeed, as noted in \cite[Problem 1103]{EP}, it is easy to rule out such a sequence: by the prime number theorem, for $i$ large enough there must exist a prime $p$ between $\sqrt{a_i}$ and $\sqrt{2a_i}$, and it is clear for such a prime that the residue $t = a_i$ of $a_i \pmod{p^2}$ will be larger than $p^2/2$.
\end{remark}

\subsection{Properties \texorpdfstring{$\overline{P}$}{P} and \texorpdfstring{$\overline{P}_\infty$}{P\_infty}}
In contrast to the previous results on squarefree sums, sequences with properties $\overline{P}$ and $\overline{P}_\infty$ can be far denser:

\begin{theorem}\label{overp}\  
\begin{itemize}
    \item[(i)] If $A$ has property $\overline{P}$ or $\overline{P}_\infty$, then the upper density of $A$ is strictly less than $\frac{6}{\pi^2}$.
    \item[(ii)] Conversely, for any $\eps>0$, one can find a sequence $A$ obeying property $\overline{P}$ (and hence $\overline{P}_\infty$) whose lower density (and hence upper density) is at least $\frac{6}{\pi^2}-\eps$.
\end{itemize}
\end{theorem}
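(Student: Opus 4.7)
For part (i), the plan is to argue by contradiction. Suppose $A$ has property $\overline{P}_\infty$ with upper density equal to $\tfrac{6}{\pi^2}$. Since $\overline{P}_\infty$ guarantees infinitely many witnesses $n$ (i.e.\ $n$ for which $A\setminus(\SF-n)$ is finite), pick two distinct ones $n_1\ne n_2$. Along a subsequence $N_k\to\infty$ realising the upper density, the finiteness of both $A\setminus(\SF-n_1)$ and $A\setminus(\SF-n_2)$ forces
\[
  \liminf_{k\to\infty}\frac{|(\SF-n_1)\cap(\SF-n_2)\cap[N_k]|}{N_k}\;\ge\;\tfrac{6}{\pi^2}.
\]
A standard CRT-type squarefree sieve, however, computes the natural density of $(\SF-n_1)\cap(\SF-n_2)$ as $\tfrac{6}{\pi^2}\prod_{p:\,p^2\nmid n_1-n_2}\tfrac{p^2-2}{p^2-1}$, which is strictly less than $\tfrac{6}{\pi^2}$ whenever $n_1\ne n_2$, because infinitely many primes $p$ satisfy $p^2\nmid n_1-n_2$. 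This is the desired contradiction.

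For part (ii), I would realise $A$ explicitly as an infinite intersection of translates of $\SF$, engineered so that the forbidden residues $-n_j\pmod{p^2}$ collapse onto very few residue classes at each prime $p$. Given $\eps>0$, choose a rapidly increasing sequence of primes $P_1<P_2<\cdots$ with $\sum_{l\ge 1} l/P_l\le\eps$ (for instance $P_l\sim 2^l/\eps$), set $Q_l:=\prod_{p\le P_l}p^2$, and define $n_1:=1$ and $n_j:=1+(j-1)Q_j$ for $j\ge 2$. The crucial structural property is that $n_l\equiv 1\pmod{p^2}$ whenever $p\le P_l$, so for any prime $p\in(P_l,P_{l+1}]$ the only $n_j$'s that can contribute residues mod $p^2$ distinct from $-1$ are $n_2,\dots,n_l$. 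Setting
\[
  A\;:=\;\bigcap_{j\ge 1}(\SF-n_j),
\]
each $n_j$ is automatically a witness for property $\overline{P}$.

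Writing $F_p:=\{-n_j\bmod p^2 : j\ge 1\}$ for the set of residues forbidden to $A$, the preceding analysis yields $|F_p|\le\max(1,l)$ for $p\in(P_l,P_{l+1}]$, whence $\sum_p(|F_p|-1)/p^2\le\sum_l (l-1)/P_l\le\eps$. Since $\sum_p|F_p|/p^2<\infty$, a standard Besicovitch-type density theorem (analogous to the proof that $\SF$ has natural density $\tfrac{6}{\pi^2}$) gives $A$ the natural density
\[
  \prod_p\!\left(1-\tfrac{|F_p|}{p^2}\right) \;=\;\tfrac{6}{\pi^2}\prod_p\!\left(1-\tfrac{|F_p|-1}{p^2-1}\right) \;\ge\;\tfrac{6}{\pi^2}(1-O(\eps)),
\]
and rescaling $\eps$ by an absolute constant yields the desired lower density bound.

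The main technical step I foresee is the sieve computation of the natural density of the infinite intersection $A$. Because $A$ is defined by infinitely many local conditions (one per prime), the standard inclusion-exclusion must be truncated: contributions from primes $p$ with $|F_p|/p^2$ nontrivial, or with $p^2$ comparable to the length of the range, must be shown to be negligible. The absolute convergence $\sum_p|F_p|/p^2<\infty$, guaranteed by our choice of the $P_l$, is precisely what allows the truncation error to be managed, and the argument is otherwise a routine adaptation of the sieve behind \eqref{sfx}.
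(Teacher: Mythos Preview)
Your argument for part (i) is correct and is essentially the paper's proof: both pick two distinct witnesses $n_1 \neq n_2$ and observe that, after deleting finitely many elements, $A \subseteq (\SF-n_1)\cap(\SF-n_2)$, whose density a two-residue sieve computes to be strictly below $\tfrac{6}{\pi^2}$.

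Part (ii), however, has a genuine gap. The ``standard Besicovitch-type density theorem'' you invoke is false: the convergence of $\sum_p |F_p|/p^2$ does \emph{not} imply that $\{a : a \bmod p^2 \notin F_p\ \forall p\}$ has density $\prod_p(1-|F_p|/p^2)$. A clean counterexample is $F_{p_k} = \{k\}$ (the $k$-th prime forbids the residue $k$); then $\sum_p |F_p|/p^2 < \infty$ and the product is $6/\pi^2$, yet the sifted set is empty because every integer $k$ is hit by its own prime $p_k$. The sieve behind \eqref{sfx} works only because the forbidden residue at each prime is $0$, so primes $p>\sqrt{x}$ are automatically irrelevant; for a general forbidden set this truncation fails, and your explicit choice $n_j = 1+(j-1)Q_j$ gives no control over $-n_j \bmod p^2$ for $p > P_j$. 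Concretely, the error in counting elements of $[n_j+1,\,n_j+x]$ with a square factor exceeding $P_j^2$ is of order $\sqrt{n_j}$, which dwarfs $x$.

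The paper circumvents this by \emph{not} prescribing the $n_j$ explicitly. Instead it proves (Lemma~\ref{largeP}, via a first-moment argument over $n \equiv 0 \pmod{W}$ in a long interval) that one can choose each $n_j$ so that, in addition to $n_j \equiv 0\pmod{p^2}$ for small $p$, one has $n_j+a \not\equiv 0 \pmod{p^2}$ whenever $p$ is large and $1 \le a \le p/(\log\log p)^2$. This second condition is exactly what controls the large-prime contributions: it forces any prime $p$ that excludes some $a\in[x]$ to satisfy $p \ll x(\log\log x)^2$, and bounds the number of excluded classes modulo $p^2$ by $\log\log p$. With that in hand the density computation really is routine. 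Your construction captures the first condition but not the second, and the second cannot be recovered from arithmetic information about $1+(j-1)Q_j$ alone.
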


We prove this result in Section \ref{overp-sec}.  We summarize some of the above results in Table \ref{table-results}.

\begin{table}
\begin{center}
\begin{tabular}{|c|c|c|}
\hline
Property & Admissibility & Density \\
\hline
$P$ & arbitrary & $0$ (arbitrarily slowly) \\
$Q$ & admissible & $\leq \frac{6}{\pi^2}$ (equality attainable) \\
$\overline{P}$ & admissible & $<\frac{6}{\pi^2}$ (equality almost attainable) \\
$\overline{P}_\infty$ & almost admissible & $<\frac{6}{\pi^2}$ (equality almost attainable) \\
squarefree sums & admissible & $0$ (with at least $O(x^{-1/4})$ decay) \\
\hline
\end{tabular}
\end{center}
\caption{The admissibility and possible densities of (infinite) sequences with the various properties identified by Erd\H{o}s.}\label{table-results}
\end{table}

\subsection{The size of admissible sets}

In \cite[p. 180]{ER81}, Erd\H{o}s defines $A(x)$ (\href{https://oeis.org/A083544}{OEIS A083544}) to be the maximal value of $|A \cap [x]|$ where $A$ ranges over all admissible sets.  By the Chinese remainder theorem and standard sieve theory, $A(x)$ is also the maximal value of $\sum_{y \leq n < y+x} \mu^2(n)$ where $\mu$ is the M\"obius function and where $y$ ranges over all positive integers.  From \eqref{sfx} one has the lower bound
$$A(x) \geq |\SF \cap [x]| = \frac{6}{\pi^2} x + O\left(\frac{\sqrt{x}}{\exp(c \log^{3/5} x / (\log\log x)^{1/5} )}\right)$$
for $x \geq 3$. Erd\H{o}s states that Ruzsa proved that the strict inequality $A(x) > |\SF \cap [x]|$ holds for infinitely many $x$ and writes

\begin{quote}
\emph{Probably this holds for all large $x$. It would be of some interest to estimate $A(x)$ as accurately as possible.} \cite[p.\,180]{ER81}
\end{quote}

By improving upon the aforementioned lower bound on $A(x)$, we can confirm Erd\H{o}s' conjecture.

\begin{theorem}\label{admis-prop} For sufficiently large $x$, one has $\frac{\sqrt{x}}{\log x} \ll A(x) - \frac{6}{\pi^2} x \ll x^{4/5}$.  In particular, we have $A(x) > |\SF \cap [x]|$ for all large $x$.
\end{theorem}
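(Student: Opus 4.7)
The plan is to prove the lower and upper bounds separately, both relying on the Möbius identity
\[
\sum_{y \leq n < y+x} \mu^2(n) \;=\; \sum_{d \geq 1} \mu(d)\, N_d(y, x), \qquad N_d(y,x) := \#\{n \in [y, y+x) : d^2 \mid n\}.
\]

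For the \emph{lower bound} $A(x) - \frac{6}{\pi^2} x \gg \sqrt{x}/\log x$, I take $P := C\sqrt{x}\log x$ for a suitable absolute $C$, let $\mathcal{P}$ denote the primes in $(\sqrt{x}, P]$, and set $M := \prod_{p \in \mathcal{P}} p^2$. By PNT and partial summation, $\sum_{p \in \mathcal{P}} p^{-2} \sim 2/(\sqrt{x}\log x)$. For each $p \in \mathcal{P}$ there are $p^2 - x > 0$ residue classes mod $p^2$ for which $[y, y+x)$ contains no multiple of $p^2$; by the Chinese remainder theorem the set $G \subseteq [0, M)$ of $y$ simultaneously good for every $p \in \mathcal{P}$ is nonempty. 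For any $y \in G$, $N_d(y,x) = 0$ whenever $\gcd(d, M) > 1$. Averaging $R(y,x) := \sum_{y \leq n < y+x} \mu^2(n)$ over $y \in [1, KM]$ with $y \bmod M \in G$ and letting $K \to \infty$ sends each remaining $N_d$ to its true mean $x/d^2$, yielding
\[
\operatorname{Avg}_{y \in G} R(y, x) \;=\; x\!\!\prod_{p \notin \mathcal{P}}\!\!\bigl(1 - p^{-2}\bigr) \;=\; \frac{6x/\pi^2}{\prod_{p \in \mathcal{P}}(1 - p^{-2})} \;=\; \frac{6x}{\pi^2} + \Theta\bigl(\sqrt{x}/\log x\bigr).
\]
Any $y$ attaining at least this average then witnesses $A(x) \geq \frac{6x}{\pi^2} + c\sqrt{x}/\log x$ for some absolute $c > 0$.

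For the \emph{upper bound} $A(x) - \frac{6}{\pi^2} x \ll x^{4/5}$, I fix $y$ and split the Möbius sum at $D := x^{1/5}$. Using $N_d = x/d^2 + O(1)$ and $\sum_{d > D} d^{-2} \ll 1/D$, one obtains $\sum_{d \leq D}\mu(d)N_d = \frac{6x}{\pi^2} + O(x/D + D) = \frac{6x}{\pi^2} + O(x^{4/5})$. The intermediate range $d \in (D, \sqrt{x}]$ contributes $O(x/D + \sqrt{x}) = O(x^{4/5})$ via the trivial bound $N_d \leq x/d^2 + 1$. The delicate piece is $d \in (\sqrt{x}, \sqrt{y+x}]$: each $N_d \in \{0,1\}$, and one must show, uniformly in $y$, that
\[
\#\bigl\{d \in (\sqrt{x}, \sqrt{y+x}] : [y, y+x) \cap d^2\mathbb{Z} \neq \emptyset\bigr\} \;\ll\; x^{4/5}.
\]
Parametrizing by pairs $(k, d)$ with $kd^2 \in [y, y+x)$, this count reduces to bounding integer points close to the curves $d = \sqrt{y/k}$ over dyadic ranges of $d$, which I would attack via Filaseta--Trifonov-type lattice-point techniques.

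This last lattice-point estimate is the \emph{main obstacle}. A naive pair-count yields only $O(y/x + \sqrt{x})$, which surpasses $x^{4/5}$ once $y > x^{9/5}$; closing the bound uniformly in $y$ requires finer control of the spacings $\{y/d^2\}$ (or, equivalently, of the gaps between consecutive $d$ whose squares have a multiple near $y$) across dyadic blocks. All the remaining ingredients---inclusion--exclusion, CRT, PNT with partial summation, and the averaging argument for the lower bound---are routine.
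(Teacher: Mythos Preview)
Your lower bound is correct in spirit, though more elaborate than needed. The paper bypasses the averaging of $R(y,x)$ entirely: it draws $n$ uniformly from $\mathbb{Z}/W\mathbb{Z}$ with $W = \prod_{p \le \sqrt{x}} p^2$ and notes that for each fixed $a \in [x]$ the event ``$p^2 \nmid n+a$ for all $p \le \sqrt{x}$'' has probability exactly $\prod_{p \le \sqrt{x}}(1-p^{-2}) \ge \tfrac{6}{\pi^2} + c/(\sqrt{x}\log x)$. Linearity of expectation then produces an $n$ for which the set $A = \{a \in [x] : p^2 \nmid n+a \text{ for all } p \le \sqrt{x}\}$ has the required size, and $A$ is automatically admissible at primes $p > \sqrt{x}$ because $A \subset [x]$ misses the class $\lfloor x \rfloor + 1 \pmod{p^2}$. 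Your version achieves the same gain by conditioning on the primes in $(\sqrt{x}, C\sqrt{x}\log x]$, but the line ``letting $K \to \infty$ sends each remaining $N_d$ to its true mean $x/d^2$'' hides an interchange of limit and infinite sum that needs justification (the range of contributing $d$ grows with $y$).

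The upper bound, however, has a genuine gap. You correctly isolate the range $d > \sqrt{x}$ as the obstacle and correctly observe that the trivial pair-count gives only $O(\sqrt{x} + y/x)$, which is useless once $y > x^{9/5}$. But Filaseta--Trifonov methods will not close this: those techniques bound the number of $d \in [D,2D]$ with a multiple of $d^2$ in $[y, y+H]$ by quantities that depend polynomially on $y$, and are tuned to the regime $H = y^{\theta}$ with $\theta > 0$ bounded away from zero. In your setting $x$ is fixed while $y$ is arbitrary, so $\theta = \log x / \log y \to 0$ and the bounds degenerate. More fundamentally, you are proposing to control the \emph{unsigned} count $\sum_{d > \sqrt{x}} N_d(y,x)$, thereby discarding all cancellation from $\mu(d)$; there is no reason to expect this count to be $\ll x^{4/5}$ uniformly in $y$, and proving any such uniform lattice-point bound would be a substantial result in its own right.

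The paper instead abandons the interpretation $A(x) = \max_y R(y,x)$ for the upper bound and works directly with an arbitrary admissible $A \subset [x]$. Since $A$ avoids one residue class modulo $p^2$ for every prime $p$, the large sieve for square moduli (Theorem~\ref{sqsieve} with $\omega(p^2)=1$) gives
\[
|A| \le \frac{x + Q^4}{\sum_{q \le Q} \mu^2(q)\prod_{p \mid q}(p^2-1)^{-1}} = \frac{6}{\pi^2}x + O\!\left(\frac{x}{Q} + Q^4\right),
\]
and the choice $Q = x^{1/5}$ finishes. The sieve is the right tool precisely because it is insensitive to which particular residue classes are removed, hence automatically uniform in the translation~$y$.
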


We establish this result in Section \ref{admis-sec}.
By comparing the OEIS sequences \href{https://oeis.org/A013928}{A013928} and \href{https://oeis.org/A083544}{A083544} (see Figure \ref{fig:shift}) it in fact seems likely that $A(x) > |\SF \cap [x]|$ holds for all $x \geq 18$.

\begin{figure}
    \centering
\centerline{\includegraphics[width=\linewidth]{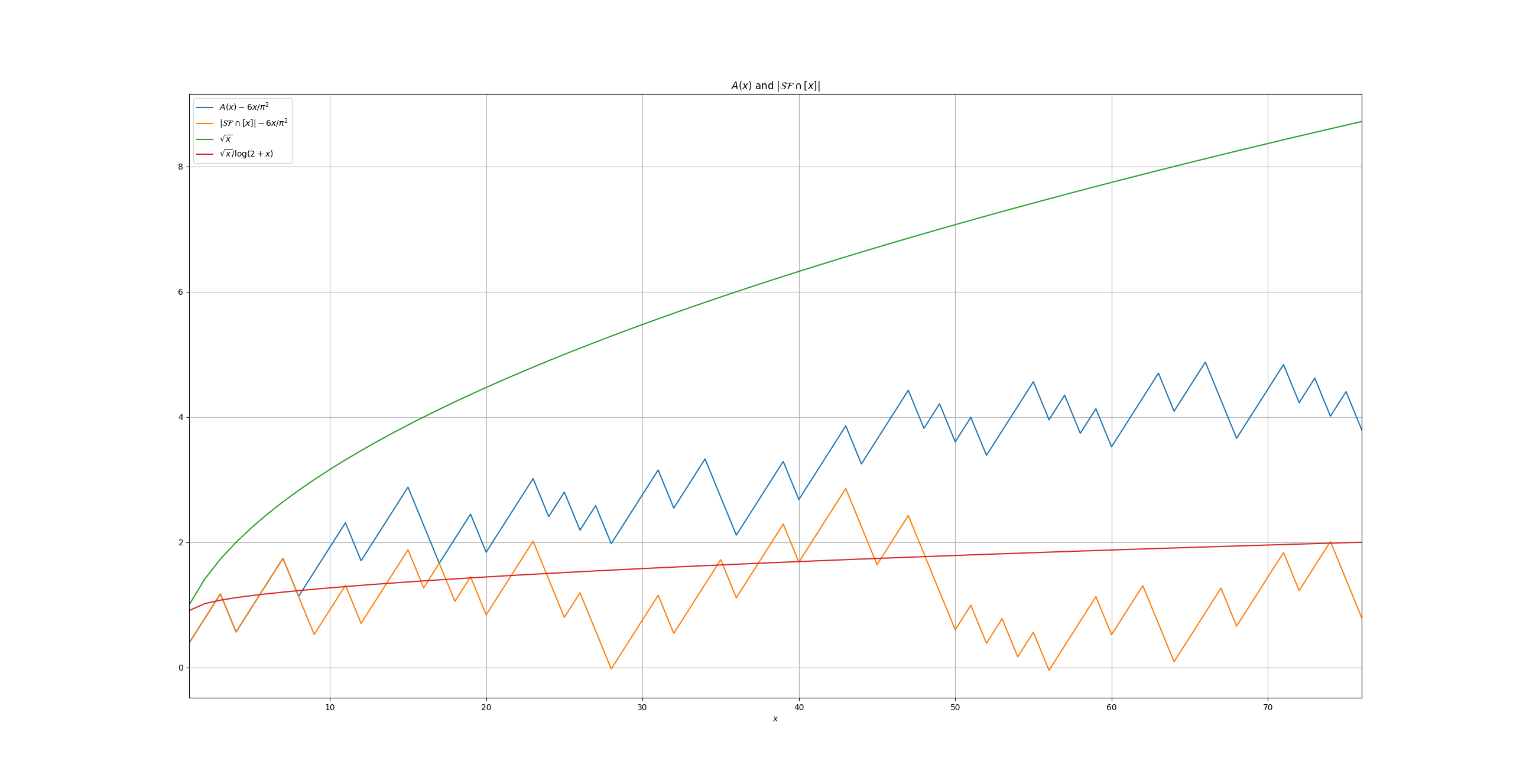}}
    \caption{A comparison of $A(x)$ (\href{https://oeis.org/A083544}{OEIS A083544}) and $|\SF \cap [x]|$ (\href{https://oeis.org/A013928}{OEIS A013928}), after subtracting off the main term $\frac{6}{\pi^2}x$.  All data is drawn from the OEIS.  The numerical range is too short to reach any definitive prediction about the growth rate of
    $A(x) - \frac{6}{\pi^2} x$, although the upper bound of $O(x^{4/5})$ is likely quite far from the truth.}
    \label{fig:shift}
\end{figure}

\subsection{Specific sequences}

We can specialize the above discussion to the four specific sequences $A_1,A_2,A_3,A_4$ singled out in \cite{ER81}.  We first of all claim that all four sequences are admissible (and thus almost admissible).  Indeed, $A_1$ avoids $0 \pmod{4}$ and $1 \pmod{p^2}$ for all odd primes $p$, and $A_2$ similarly avoids $2 \pmod{4}$ and $-1 \pmod{p^2}$ for all odd primes $p$. Further, both $A_3$ and $A_4$ avoid $0 \pmod{4}$, while the elements $j! \pm 1$ are congruent to $\pm 1 \pmod{p^2}$ for all $j \geq 2p$.  Thus, for odd $p$, $A_3$ and $A_4$ occupy at most $2p < p^2$ residue classes modulo $p^2$ and must therefore avoid at least one, demonstrating admissibility.

Since all four sequences grow faster than $\exp(C j / \log j)$, we conclude that $A_1,A_2,A_3,A_4$ all obey property $Q$ by Theorem \ref{suff}.  In fact, the slightly stronger conclusion of Theorem \ref{suff-reg} also holds for these sequences.  On the other hand, the specific sums $3+5$, $1+3$, $3+25$, and $1+23$ are all divisible by $2^2$, showing that none of $A_1,A_2,A_3,A_4$ have squarefree sums.

As for properties $\overline{P}$ and $\overline{P}_\infty$, Artin's primitive root conjecture implies that $2$ is a primitive root of $\Z/p\Z$ for infinitely many primes $p$ (\href{https://oeis.org/A001122}{OEIS A001122}).  Under the additional assumption of the plausible conjecture that infinitely many of these primes are not Wieferich primes\footnote{While it is believed that infinitely many Wieferich primes exist, they are exceedingly rare in practice; the only known examples are $1093$ and $3511$ (\href{https://oeis.org/A001220}{OEIS A001220}).} (i.e., $p^2 \nmid 2^{p-1}-1$), we conclude that $2$ is a primitive root of $\Z/p^2\Z$ for infinitely many $p$.  Assuming the latter, let $n \in \mathbb{N}$ be given, and let $p > n+1$ be a prime for which $2$ is a primitive root of $\Z/p^2\Z$. We then obtain infinitely many $a$ in both $A_1$ and $A_2$ for which $a+n$ is divisible by $p^2$, so that $A_1, A_2$ do not obey properties $\overline{P}$ or $\overline{P}_\infty$.  It looks difficult to establish this fact unconditionally, however.  For $A_3, A_4$, the situation seems delicate: for fixed squarefree $n$, the expression $j!+n$ is, for all $j \ge 2n$, not divisible by $p^2$ for any $p \leq j$, so by \eqref{psum} below one heuristically predicts this number to be squarefree with a ``probability'' of $1 - O(1/j \log j)$.  The sum $\sum_j \frac{1}{j \log j}$ is barely divergent, suggesting that there are infinitely many $j$ for which $j! + n$ is not squarefree, implying that $A_3$ and $A_4$ do not obey the properties $\overline{P}, \overline{P}_\infty$.  Given the special structure of the sequence $j!+n$ however, we do not find this to be a completely convincing argument.

Probabilistic heuristics also suggest that $A_1, A_2, A_3, A_4$ do not have property $P$, but this looks difficult to establish with known techniques.  Even just verifying that (say) $2^j+1$ can be squarefree for infinitely many $j$, while extremely plausible from such heuristics, does not seem amenable to existing tools such as sieve theory due to the rapid growth of the sequence, even if one assumes powerful conjectures such as the abc conjecture or Artin's conjecture.  In \cite[p. 178]{ER81} it is written that ``of course it seems hopeless'' to solve these problems.

\subsection{Primes and \texorpdfstring{$k$}{k}-free sequences}

Erd\H{o}s also writes

\begin{quote}
\emph{These problems can of course be stated for other sequences than $p^2$, but we formulate only one such question: Is there an infinite sequence $a_1 < a_2 < \dots$ so that there are infinitely many $n$ for which for all $a_k < n$, $n+a_k$ is always a prime?} \cite[p.\,179]{ER81}
\end{quote}

It turns out that with the recent result \cite{taoziegler}, this question is answered in the affirmative.  Indeed, from \cite[Corollary 1.6]{taoziegler} one can find $a_1 < a_2 < \dots$ and $b_1 < b_2 < \dots$ such that $a_i + b_j$ is prime for all $1 \leq i < j$.  Taking a sufficiently sparse subsequence $a_{j_l}$ for which $a_{j_{l+1}} > b_{j_l}$ holds for all $l$, then gives the claim.

Moving on to higher powers instead, without too much difficulty one can extend the results in this paper from squarefree integers to $k$-free integers, for any fixed $k \ge 2$. That is, by making the obvious necessary numerical and definitional changes, we may formulate the results for $\N \backslash \bigcup_p p^k \N$ as opposed to $\SF$. For instance, $\zeta(2) = \pi^2/6$ should then be replaced by $\zeta(k)$ everywhere, suitable numerical adjustments should be made to Theorem \ref{suff-reg}, the exponents $1/2$ in Theorem \ref{counter} are to be replaced by $1/k$ (and $\sqrt{x}$ similarly replaced by $x^{1/k}$ in \eqref{sfx} and Theorem \ref{admis-prop}), and the exponent $4/5$ in Theorem \ref{admis-prop} should be replaced by $\frac{2k}{3k-1}$.  We leave the full proofs of these generalizations to the interested reader. 

\subsection{Notation}\label{notation-sec}

All sums and products over $p$ are understood to be over primes, with $2 = p_1 < p_2 < \ldots$ denoting the sequence of primes.  If $E$ is a finite set, we use $|E|$ to denote its cardinality.  For any $x$, we use $[x]$ to denote the discrete interval
$$ [x] \coloneqq \{ n \in \N: n \leq x \}.$$

If $A$ is a set or sequence of natural numbers, we define its upper and lower density to be
$$ \limsup_{x \to \infty} \frac{|A \cap [x]|}{x}$$
and
$$ \liminf_{x \to \infty} \frac{|A \cap [x]|}{x}$$
respectively. If the upper and lower densities agree, then they are both equal to the natural density, defined as
$$ \lim_{x \to \infty} \frac{|A \cap [x]|}{x}.$$

We use $X \ll Y$, $Y \gg X$, or $X = O(Y)$ to denote the estimate $|X| \leq CY$ for some absolute constant $C$.  If there is a parameter $k$ that goes to infinity, we use $X = o(Y)$ to denote the estimate $|X| \leq c(k) Y$ for some function $c(k)$ of that parameter that goes to zero as $k \to \infty$.  Finally, we write $X \asymp Y$ for $X \ll Y \ll X$.

\subsection{Acknowledgments}

We thank Thomas Bloom for his website \cite{EP} where the above problems were brought to the authors' attention at \cite[Problems 1102, 1103]{EP}.  We also thank Benjamin Bedert for references concerning sets with squarefree sums.

TT was supported by the James and Carol Collins Chair, the Mathematical Analysis \& Application Research Fund, and by NSF grants DMS-2347850, and is particularly grateful to recent donors to the Research Fund. 

\section{Basic estimates}

From the prime number theorem we have
$$ \sum_{x \leq p < 2x} \frac{1}{p^2} \asymp \frac{1}{x \log x}$$
for all large $x$, and thus on summing
\begin{equation}\label{psum}
\sum_{p \geq x} \frac{1}{p^2} \asymp \frac{1}{x \log x} \ll \frac{1}{x}
\end{equation}
for any $x \geq 2$.  Similar arguments give the variant bounds 
\begin{align}
\sum_{p \geq x} \frac{1}{p (\log\log p)^2} &\asymp \frac{1}{\log \log x} \label{plog}\\
\sum_{p \geq x} \frac{\log\log p}{p^2} &\asymp \frac{\log\log x}{x \log x} \label{plog-3}\\
\sum_{2 < p \leq x} \frac{p}{(\log\log p)^2} &\asymp \frac{x^2}{\log x(\log \log x)^2} \label{plog-2} \\
\sum_{2 < p \leq x} \log\log p &\asymp \frac{x \log\log x}{\log x} \label{plog-4}
\end{align}
for any $x \geq 3$.

The following basic probabilistic calculation will be used frequently:

\begin{lemma}\label{freq}  Let $b \pmod{W}$, $c \pmod{q}$ be congruence classes with $W$ coprime to $q$, and let $I$ be an interval of length $L \geq W$.  Then, if $n$ is drawn uniformly at random from those elements of $b \pmod{W}$ that lie in $I$, the probability that $n$ lies in $c \pmod{q}$ is $O( \frac{1}{q} + \frac{W}{L})$.
\end{lemma}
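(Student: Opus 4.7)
The plan is to count both the total number of $n$ in the sample space (elements of $b \pmod W$ lying in $I$) and the number of favorable outcomes (elements lying simultaneously in $b \pmod W$ and $c \pmod q$), and then take the ratio. The coprimality assumption $\gcd(W,q)=1$ is exactly what allows the Chinese remainder theorem to combine the two congruence conditions into a single one.

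First, I would use the standard fact that for any modulus $M$ and any residue class, the number of elements of that class in an interval of length $L$ is $L/M + O(1)$, with the $O(1)$ term bounded by $1$ in absolute value. Applied to $b \pmod W$, this gives a sample-space size of $N_1 = L/W + O(1)$. By CRT, the conditions $n \equiv b \pmod W$ and $n \equiv c \pmod q$ together are equivalent to a single congruence condition modulo $Wq$, so the number of favorable outcomes is $N_2 = L/(Wq) + O(1)$.

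The probability in question is $N_2/N_1$, and I now just need to convert this into the claimed $O(1/q + W/L)$ bound. The only mildly delicate point is ensuring the denominator $N_1$ is not too small. Since $L \geq W$, the interval $I$ contains at least one element of $b \pmod W$, so $N_1 \geq 1$; more quantitatively, combining $N_1 \geq \lfloor L/W \rfloor$ with $L \geq W$ yields $N_1 \gg L/W$ (trivially when $W \leq L < 2W$ because then $N_1 \geq 1 \geq L/(2W)$, and by the usual estimate when $L \geq 2W$). Substituting, one obtains
$$\frac{N_2}{N_1} \leq \frac{L/(Wq) + O(1)}{L/W} \ll \frac{1}{q} + \frac{W}{L},$$
which is the desired bound.

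This is entirely routine; there is no real obstacle beyond bookkeeping. The only thing to watch is the borderline regime where $L$ is just barely at least $W$, in which case the claimed bound $O(1/q + W/L)$ degenerates to $O(1)$ and is trivial, but one must ensure the denominator-estimate still goes through in that regime, as handled above.
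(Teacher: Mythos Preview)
Your proof is correct and follows essentially the same approach as the paper: count the sample space as $\asymp L/W$, use the Chinese remainder theorem to see that the favorable outcomes form a single residue class modulo $Wq$ of size $O(L/(Wq)+1)$, and take the ratio. The paper's proof is just a terser version of yours, omitting the explicit check that $N_1 \gg L/W$ in the borderline regime $W \leq L < 2W$, which you handled carefully.
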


\begin{proof}  Clearly, $n$ ranges in a set of size $\asymp \frac{L}{W}$.  Meanwhile, by the Chinese remainder theorem, the intersection of $b \pmod{W}$ and $c \pmod{q}$ is a residue class modulo $Wq$, which intersects $I$ in $O( \frac{L}{Wq} + 1)$ points.  The claim follows.
\end{proof}

\section{Property \texorpdfstring{$P$}{P}}\label{P-sec}

We now show \Cref{thmP1}.

\begin{proof}
We will first show part (ii). Let $f(j)$ be any function that goes to infinity with $j$ and define $W_r \coloneqq (p_1 p_2 \cdots p_r)^2$ with $W_0 = 1$, where we recall that $p_1 < p_2 < \dots$ is the sequence of primes. Let $0 = l_0 < l_1 < \ldots $ be a strictly increasing sequence of integers for which $f(j) \ge W_r$ holds for all $j \ge l_r$, which exists by the assumption that $f(j)$ goes to infinity. Finally, for a positive integer $j$, let $k(j)$ be such that $l_{k(j)} \le j < l_{k(j)+1}$. Now we inductively define the sequence $A = \{1 = a_1 < a_2 < \ldots \}$ as follows. First off, $a_j = j$ for all $j \le l_1$. Then, assuming that $a_{j-1}$ is defined, we define $a_j$ to be the smallest integer larger than $a_{j-1}$ for which $a_{j} \equiv -r \pmod{p_r^2}$ holds for all $r$ with $1 \le r \le k(j)$. We then need to show that the sequence thusly defined has property $P$, and we need to verify that the inequality $\frac{a_j}{j} \le f(j)$ holds for all $j \in \mathbb{N}$.

By the Chinese remainder theorem, we know that $a_{j'} \le a_{j'-1} + W_{k(j')} \le a_{j'-1} + W_{k(j)}$ holds for all $j' \le j$, so that $a_j \le jW_{k(j)}$ by induction. Since $f(j) \ge W_{k(j)}$, the desired inequality holds.

To see that the sequence we defined has property $P$, let $n$ be any positive integer and let $j$ be any integer larger than or equal to $l_n$. Then $k(j) \ge n$, so that (by using $r = n$ in the definition of $a_j$) we see $a_j \equiv -n \pmod{p_n^2}$ implying $a_j + n \notin \SF$. As $n$ was arbitrary and $j$ was any integer larger than or equal to $l_n$, we conclude that $\SF - n$ has finite intersection with $A$ for all $n \in \mathbb{N}$, so that $A$ has property $P$.

To prove the part (i) of Theorem \ref{thmP1}, assume that $A$ is sequence with positive upper density $\delta$.  Let $C$ be a large absolute constant, and let $H$ be sufficiently large depending on $\delta$ and $C$.  We claim that there exist arbitrarily large $a \in A$ for which $a+[H]$ intersects $\SF$.  By the pigeonhole principle, this implies that an $n \in [H]$ exists such that $n+a \in \SF$ for infinitely many $a$, establishing that $A$ does not have property $P$, as desired.

It remains to establish the claim.  We will use the Maier matrix method, which in this context is based on counting\footnote{One can imagine an $A \times [H]$ matrix with entry $a+h$ in the $a^{\mathrm{th}}$ row and $h^{\mathrm{th}}$ column.  Roughly speaking, we want to sieve $a+h$ to be squarefree for many entries in a column, but initially it is easier to obtain many squarefree (or nearly squarefree) entries in a row.}
the number of pairs $(a,h) \in A \times [H]$ for which $a+h \in \SF$.  Since $A$ has upper density $\delta$, there exist arbitrarily large $N$ such that
$$ |A \cap [N]| \geq \frac{\delta N}{2}$$
and hence
$$ |A \cap [\delta N/4, N]| \geq \frac{\delta N}{4}.$$
In particular we can take $N \geq H$.  For $H$ large enough, we see from \eqref{basel} and standard sieving that for any given $a \in A$, there exist $\asymp H$ choices of $h \in [H]$ such that $p^2 \nmid a+h$ for all $p \leq C$.  By Lemma \ref{freq}, for each $C < p \leq H^{1/2}$, the number of $h \in [H]$ for which $p^2|a+h$ is $O(H/p^2)$.  Applying \eqref{psum}, we conclude for $C$ large enough that there are $\asymp H$ choices of $h \in [H]$ such that $p^2 \nmid a+h$ for all $p \leq H^{1/2}$.  Thus
$$ |\{ (a,h) \in (A \cap [\delta N/4, N]) \times [H]: p^2 \nmid a+h \hbox{ for all } p \leq H^{1/2} \}| \gg \delta N H.$$
On the other hand, for each $p > H^{1/2}$, there are $O(N/p^2)$ possible values of $a+h = O(N)$ that are divisible by $p^2$. Each of these is associated to at most $O(H)$ pairs $(a,h)$, hence by \eqref{psum} we have
$$ |\{ (a,h) \in (A \cap [\delta N/4, N]) \times [H]: a+h \in \SF \}| \gg \delta N H - O\left( \frac{NH}{H^{1/2} \log H} \right).$$
Taking $C$ large enough, we conclude that this set is non-empty, which gives the claim.
\end{proof}

\begin{remark} The above argument shows that the $n$ for which $n+a \in \SF$ for infinitely many $a \in A$ can be chosen to be of size $O( 1 / \delta^2 \log^2(1+\frac{1}{\delta}) )$. The bound can be improved by taking advantage of known moment estimates
$$ \sum_{s_{n+1} \leq x} (s_{n+1} - s_n)^\gamma \leq C_\gamma x$$
for the gaps between consecutive squarefree numbers $s_n, s_{n+1}$ and some exponent $\gamma > 1$.  Any such bound can be used to bound $n$ by $O(1/\delta^{1/(\gamma-1)})$ by applying Markov's inequality.  Such a moment estimate is currently known for $\gamma < 3.75$ \cite{chan}, thus one can take $n \ll 1/\delta^{\frac{1}{2.75}+o(1)}$ in the limit $\delta \to 0$.  
\end{remark}

\section{Property \texorpdfstring{$Q$}{Q}}

\subsection{The lower bound}\label{Q-sec}

In this section we establish \Cref{thmQ1-lower}.
The key proposition used for this construction will be

\begin{proposition}[Key construction]\label{key}  Let $0 < \eps < 1$ and let $n$ be a sufficiently large natural number depending on $\eps$.  Then there exist arbitrarily large natural numbers $n'$ with the following two properties:
\begin{itemize}
    \item[(i)] $n'+a \in \SF$ for all $a \in \SF \cap [n]$.
    \item[(ii)] For any $n \leq R \leq n'$, one has
    $$ \frac{|\{a \in \SF \cap [R]: n'+a \in \SF \}|}{R} \geq \frac{6}{\pi^2} - O(\eps).$$
\end{itemize}
\end{proposition}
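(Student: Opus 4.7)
My plan is to seek $n'$ of the form $n' \equiv 0 \pmod{W}$, where $W = \prod_{p \le y} p^2$ and $y$ is a parameter at least $\sqrt{n}$ (and hence at least $C/\eps$ once $n$ is sufficiently large depending on $\eps$). The key structural observation is that when $n' \equiv 0 \pmod{p^2}$ for all primes $p \le y$, we have $p^2 \mid n'+a$ if and only if $p^2 \mid a$, so for squarefree $a$ the small primes automatically preserve squarefreeness of $n'+a$. Consequently both (i) and (ii) reduce to controlling divisibility of $n'+a$ by $p^2$ for primes $p > y$.

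For (i), the task becomes finding $n' \equiv 0 \pmod{W}$ with $n' \not\equiv -a \pmod{p^2}$ for every $a \in \SF \cap [n]$ and every prime $p$ with $y < p \le \sqrt{n'+n}$. For each such $p$ this excludes at most $|\SF \cap [n]| \le n$ residues out of the $p^2 > n$ available ones, and $\sum_{p > y} n/p^2 = O(\sqrt{n}/\log n)$ by \eqref{psum}. A standard lower-bound sieve (Selberg or Brun) then yields at least $\gg (N/W)\exp(-O(\sqrt{n}/\log n))$ valid $n' \in [N, 2N]$, a quantity which tends to infinity with $N$ and so produces arbitrarily large $n'$ fulfilling (i).

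For (ii), the deficit $|\SF \cap [R]| - |\{a \in \SF \cap [R] : n'+a \in \SF\}|$ equals, by (i) and the divisibility $W \mid n'$, the count of $a \in \SF \cap (n, R]$ for which $p^2 \mid n'+a$ for some $p > y$. Summing over such primes and $a$, each prime contributes at most $(R-n)/p^2 + 1$, giving a bulk term of size $O((R-n)/(y \log y)) = O(\eps R)$ and boundary ``$+1$'' terms whose expected size over random $n' \equiv 0 \pmod W$ is $O(\sqrt{R-n}/\log R)$. For $R$ very close to $n$ the trivial bound $R - n \le \eps R$ suffices, and for larger $R$ a dyadic decomposition $R = n\cdot 2^j$ combined with Markov's inequality controls the failure of (ii) simultaneously across all $R \in [n, n']$.

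The main obstacle is reconciling the two conditions: (i) is a rare event, holding for only an $\exp(-O(\sqrt{n}/\log n))$ fraction of $n' \equiv 0 \pmod W$, whereas (ii) must hold for all $R$. I would address this by noting that (i) and (ii) restrict $n' \bmod p^2$ to avoid residues $-a \bmod p^2$ for essentially disjoint ranges of $a$ (namely $a \le n$ versus $a > n$), so the events are effectively independent; conditional on (i), a positive proportion of the sieved $n'$ should still satisfy (ii). If the dyadic union bound degrades for $n'$ vastly larger than $n$, the parameter $y$ can be enlarged (while keeping $y \ge \sqrt{n}$) to restore it, at the cost of a larger modulus $W$ but still permitting $n'$ to be chosen arbitrarily large.
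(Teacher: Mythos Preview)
Your setup with $n'\equiv 0\pmod{W}$ is exactly right, and is what the paper does. The gap is in your choice of the parameter $y$. You take $y$ of order $\sqrt{n}$, which makes $\sum_{p>y} n/p^2 \asymp \sqrt{n}/\log n \to \infty$; this is precisely why condition (i) becomes the ``rare event'' you then struggle to reconcile with (ii). Your proposed fix via ``effective independence'' of (i) and (ii) is not a proof: the conditioning on (i) distorts the distribution of $n' \pmod{p^2}$ for every prime $p>y$, and these are the same primes entering into (ii), so one cannot simply assert that a positive proportion of the sieved $n'$ satisfy (ii) without a genuine second-moment or conditional computation. You also appeal to a Brun/Selberg lower-bound sieve for (i), but the sieving moduli $p^2$ here range all the way up to $2N$, so you are well outside the regime where the fundamental lemma applies out of the box; making this rigorous would itself require some care.

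All of this evaporates if you simply take $y$ larger. The paper sets $y=n^2$, so that $\sum_{p>n^2} n/p^2 \ll 1/(n\log n)$. Then, choosing $n'$ uniformly at random among multiples of $W$ in $[x/2,x]$ for very large $x$, a plain union bound over $a\in[n]$ and primes $n^2<p\le\sqrt{2x}$ (using Lemma~\ref{freq}) shows (i) fails with probability $O(1/n)$ --- no lower-bound sieve needed. For (ii) one dyadically decomposes into intervals $(R,(1+\eps)R]$; the primes $n^2<p\le\sqrt{R}$ deterministically remove only $O(R/n^2)=O(\eps^2 R)$ elements, and a union bound handles the larger primes with total failure probability $o(1)$ summed over all dyadic $R$. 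Since both (i) and (ii) now hold with probability $1-o(1)$, they hold simultaneously for some $n'$, and there is nothing to reconcile. You actually gesture toward enlarging $y$ in your final sentence, but for the wrong reason; the real payoff is that it makes (i) typical rather than rare.
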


Let us assume Proposition \ref{key} for the moment and conclude the proof of Theorem \ref{thmQ1-lower}.  By starting with a sufficiently large $n_1$ and applying the above proposition iteratively with $\eps=1/k$ for $k=1,2,\dots$, one can find a rapidly growing sequence $1 < n_1 < n_2 < \dots$ with the following properties for all natural numbers $k$:
\begin{itemize}
    \item[(a)]  $n_{k+1}+a \in \SF$ whenever $a \in \SF \cap [n_k]$.
    \item[(b)]  For any $n_k \leq R \leq n_{k+1}$, one has
    $$ \frac{|\{a \in \SF \cap [R]: n_{k+1}+a \in \SF \}|}{R} \geq \frac{6}{\pi^2} - O\left(\frac{1}{k}\right).$$
    \item[(c)] One has $n_{k+1} \geq (k+1) n_k$.
\end{itemize}

We then define
\begin{equation}\label{adef}
 A \coloneqq \bigcup_{k=1}^\infty \{ a \in \SF \cap (n_k,n_{k+1}]: n_{k+1}+a \in \SF \}.
 \end{equation}

Clearly $A \subset \SF$.  We claim that $A$ has property $Q$.  Indeed, it will suffice to show that for any $k$, one has $n_{k+1}+a \in \SF$ for all $a < n_{k+1}$.  For $a \leq n_k$ this follows from property (a) (since $A \subset \SF$) and for $n_k < a < n_{k+1}$ this follows by construction from \eqref{adef}.

Since $A \subset \SF$, it follows from both \eqref{sfx} and Theorem \ref{thmQ1-upper} that $A$ has upper density at most $\frac{6}{\pi^2}$.  To obtain the conclusion of Theorem \ref{thmQ1-lower}, it thus suffices to show that $A$ has lower density at least $\frac{6}{\pi^2}$.  It will suffice to show that for all $k \geq 2$ and all $n_k \leq R \leq n_{k+1}$, one has the lower bound
\begin{equation}\label{ara}
 \frac{|A \cap [R]|}{R} \geq \frac{6}{\pi^2} - o(1)
\end{equation}
as $k \to \infty$.
From \eqref{adef} we have
$$ |\{A \cap [R] \}| \geq |\{a \in \SF \cap [R]: n_{k+1}+a \in \SF \}|
- |\{a \in \SF \cap [n_k]: n_{k}+a \notin \SF \}| - n_{k-1}$$
for $k \ge 2$. From property (b) we have
$$ |\{a \in \SF \cap [R]: n_{k+1}+a \in \SF \}| \geq \frac{6}{\pi^2} R - o(R)$$
and
$$ |\{a \in \SF \cap [n_k]: n_k+a \in \SF \}| \geq \frac{6}{\pi^2} n_k - o(n_k)$$
and hence by \eqref{sfx}
$$ |\{a \in \SF \cap [n_k]: n_k+a \notin \SF \}| = o(n_k) = o(R).$$
Also by property (c) we have $n_{k-1} \leq n_k/k = o(R)$.  Combining these bounds we obtain \eqref{ara}.

It remains to establish Proposition \ref{key}.  Let $\eps$ and $n$ be as in the proposition. Set
$$ W \coloneqq \prod_{p \leq n^2} p^2$$
and let $x$ be sufficiently large depending on $\eps$ and $n$.  Let $n'$ be chosen uniformly at random among the multiples of $W$ in $[x/2,x]$.  It will suffice to show that $n'$ obeys properties (i) and (ii) of the proposition with probability $1-o(1)$ as $n \to \infty$.

We begin with (i).  If $a \in \SF$, then for any prime $p \leq n^2$, $p^2$ divides $W$ and hence $n'$, but not $n'+a$.  Also, if $a \leq n$, then $n'+a \leq 2x$. Thus, if $n'+a \notin \SF$, then $n'+a$ must be divisible by $p^2$ for some $n^2 < p \leq \sqrt{2x}$.  On the other hand, from Lemma \ref{freq} we see that for each such $p$ and any $a \leq n$, the probability that $n+a$ is divisible by $p^2$ is $O( \frac{1}{p^2} + \frac{W}{x})$.  By the union bound, the probability that (i) fails is then at most
$$ \ll \sum_{a=1}^n \sum_{n^2 < p \leq \sqrt{2x}} \left(\frac{1}{p^2} + \frac{W}{x}\right)
\ll \frac{1}{n} + \frac{Wn}{\sqrt{x}} \ll \frac{1}{n}$$
thanks to \eqref{psum} and the assumption that $x$ is sufficiently large. The claim (i) follows.

Now we turn to (ii) and we aim to show that the inequality
$$ |\{a \in \SF \cap [R]: n'+a \in \SF \}| \geq \frac{6}{\pi^2} R - O(\eps R)$$
holds for all $n \leq R \leq x$ with high probability.  By the error term of $O(\eps R)$, we may assume without loss of generality that $R$ is a power of $1+\eps$.  And by a telescoping series, it will then suffice to show that with probability $1-o(1)$, one has
$$ |\{a \in \SF \cap (R,(1+\eps) R]: n'+a \in \SF \}| \geq \frac{6}{\pi^2} \eps R - O(\eps^2 R)$$
for all $n \leq R \leq x$ that are powers of $1+\eps$.

So let $R$ be a fixed power of $1+\epsilon$.  From \eqref{sfx}, we have
\begin{equation}\label{rs}
|\SF \cap (R,(1+\eps) R]| = \frac{6}{\pi^2} \eps R + O(\eps^2 R).
 \end{equation}
As before, for any $a \in \SF \cap (R,(1+\eps) R]$, if $n'+a \notin \SF$, then $n'+a$ must be divisible by $p^2$ for some $n^2 < p \leq \sqrt{2x}$.  Each prime $p$ with $n^2 < p \leq \sqrt{R}$ removes at most $O(R/p^2)$ elements from this set, for a total of at most
$$ \ll \sum_{n^2 < p \leq \sqrt{R}} \frac{R}{p^2} \ll \frac{R}{n^2} \ll \eps^2 R$$
elements (this claim is vacuously true if $\sqrt{R} \leq n^2$), thanks to \eqref{psum}.  So it remains to consider the event that for some $a$ in \eqref{rs}
$n'+a$ is divisible by $p^2$ for some $\max(n^2,\sqrt{R}) < p \leq \sqrt{2x}$.  By Lemma \ref{freq} and the union bound, this event holds for a given $R$ with probability
$$ \ll \sum_{a=1}^n \sum_{\max(n^2,\sqrt{R}) < p \leq \sqrt{2x}} \left(\frac{1}{p^2} + \frac{W}{x}\right)
\ll \frac{n}{\max(n^2,\sqrt{R})} + \frac{Wn}{x}$$
thanks to \eqref{psum}. Summing over $n \leq R \leq x$ that are powers of $1+\eps$, the probability that (ii) fails is then
$$
\ll \frac{1}{\eps} \left( \frac{\log n}{n} + \frac{Wn \log x}{x} \right) \ll \frac{\log n}{\eps n}$$
since $x$ is large.  This concludes the proof of Proposition \ref{key} and hence Theorem \ref{thmQ1-lower}.

\subsection{Sufficiently rapidly growing sequences}\label{suff-sec}

Now we adapt the above arguments to prove Theorem \ref{suff}.  Let $c>0$ be a small constant, and let $C$ be sufficiently large depending on $c$.
Suppose that $A = \{a_1 < a_2 < \dots\}$ is an admissible sequence with
\begin{equation}\label{Ajj}
a_j \geq \exp(C j / \log j)
\end{equation}
for infinitely many $j$.

Let $j$ be a sufficiently large integer for which \eqref{Ajj} holds and set $x = a_j$. From \eqref{Ajj} we then see that $j \leq c \log x \log\log x$.  Now, with
$$ W \coloneqq \prod_{p \leq \frac{1}{10} \log x} p^2,$$
we have $W = x^{1/5+o(1)}$ as $x \to \infty$, by the prime number theorem.  Moreover, by the Chinese remainder theorem and admissibility, there exists an integer $b$ such that the sequence $A$ avoids the residue class $b \pmod{p^2}$ for all $p | W$.  Fix such a $b$, and let $n$ be chosen uniformly at random from the elements of the residue class $-b \pmod{W}$  in $[x/2,x]$.  We then aim to show that there is a positive probability that $n+a \in \SF$ for all $a \in A \cap [n] \subseteq A \cap [x]$. 

By construction, $n+a$ is not divisible by $p^2$ for any $p \leq \frac{1}{10} \log x$ and $a \in A$. For $a \leq x$ we get $n+a \leq 2x$, and such numbers are also not divisible by $p^2$ for any $p > \sqrt{2x}$.  So the only remaining primes $p$ to check are those with $\frac{1}{10} \log x < p \leq \sqrt{2x}$. By Lemma \ref{freq} and the union bound, the probability that $n+a$ is not in $\SF$ for some $a \in A \cap [x]$, is at most
$$ \ll j \sum_{\frac{1}{10} \log x < p \leq \sqrt{2x}} \left( \frac{1}{p^2} + \frac{W}{x}\right)$$
which by \eqref{psum}, $W = x^{1/5+o(1)}$ and the inequality $j \leq c \log x \log \log x$ can be bounded by
$$ \ll (c \log x \log\log x) \left( \frac{1}{\log x\log\log x} + \frac{x^{7/10+o(1)}}{x} \right) \ll c.$$
For $c$ small enough and $j$ (and therefore $x$) large enough, this failure probability is smaller than $1$, hence an $n \in [x/2,x]$ exists for which $n+a \in \SF$ for all $a \in A$ with $a<n$.  Since by assumption there are infinitely many $j$ for which \eqref{Ajj} holds, we see that $A$ has property $Q$ as claimed.

\subsection{Explicit version} \label{explicit}
In Section \ref{intro} we claimed that one can take $C$ in Theorem \ref{suff} to be any constant larger than $4$. To see this, we first note that we can improve \eqref{psum} and the upper bound probability in Lemma \ref{freq} to $\sum_{p \geq x} \frac{1}{p^2} = \frac{1 + o(1)}{x \log x}$ and $\frac{1}{q} + O(\frac{W}{L})$ respectively. Secondly, we remark that it is possible to increase the constant $\frac{1}{10}$ that appears in the definition of $W$, to $\frac{1}{4} - \epsilon$. With $c = \frac{1}{4} - O(\epsilon)$ and $C = \frac{1}{c} + O(\epsilon)$, one can then check that the final calculation to upper bound the failure probability (which currently gives $\ll c$) instead gives a quantity that is, for sufficiently large $x$, smaller than $1$.

Furthermore, recall that in the proof of Theorem \ref{suff} we chose $n$ in the interval $[x/2, x]$. If we instead use the interval $(x, x + x^{10/11})$, apply the aforementioned slightly improved versions of \eqref{psum} and Lemma \ref{freq}, and use the assumption that $a_j \geq \max\big(\exp(5j / \log j), a_{j-1} + a_{j-1}^{10/11}\big)$ holds for all sufficiently large $j$, Theorem \ref{suff-reg} follows analogously.

\subsection{A fast growing admissible sequence that does not obey \texorpdfstring{$Q$}{Q}}\label{counter-sec}

We now show Theorem \ref{counter}.  With $C > 0$ a large constant, we construct a random subset $A = \{a_1 < a_2 < \dots \}$ of $\SF$ by having each squarefree $n \geq 3$ independently lie in $A$ with probability $\mu_n \coloneqq \min( C \frac{\log n \log\log n}{n}, 1)$.  As $A$ is a subset of $\SF$, it is admissible.  It will then suffice to show the following claims:
\begin{itemize}
    \item[(i)] Almost surely, one has $a_j \geq \exp( c j^{1/2} / \log^{1/2} j )$ for all sufficiently large $j$ and some constant $c>0$ depending on $C$.
    \item[(ii)]  Almost surely, for sufficiently large $x$ that are powers of two, there does not exist any $n \in [x,2x]$ such that $n+a$ is squarefree for all $a \in A \cap [x]$.  In particular, $A$ does not obey property $Q$.
\end{itemize}

We begin with (i).  For sufficiently large $x$, the cardinality $|A \cap [x]|$ is a sum of independent Bernoulli variables, with mean 
$$\sum_{n \in \SF \cap [x]} \mu_n \asymp (C + o(1)) \log^2 x \log\log x$$
and variance
$$\sum_{n \in \SF \cap [x]} \mu_n (1-\mu_n) \asymp (C + o(1)) \log^2 x \log\log x.$$
Applying Bennett's inequality\footnote{One could also use Bernstein's inequality or Azuma's inequality here if desired.} \cite{bennett}, we conclude that
$$ \mathbf{P}( |A \cap [x]| \geq C^2 \log^2 x \log\log x ) \le \exp( - C \log^2 x \log\log x ),$$
if $C$ is large enough. Applying the Borel--Cantelli lemma, we conclude that almost surely, one has 
$$ |A \cap [x]| <  C^2 \log^2 x \log\log x$$ 
for all sufficiently large $x$. In other words, if $x$ is sufficiently large, then we almost surely have $a_j > x$ whenever $j > C^2 \log^2 x \log\log x$.  The claim (i) then follows by a routine calculation.

Now we turn to (ii).  Let $x$ be a sufficiently large power of two and suppose that $A \cap [x]$ occupies every non-zero residue class modulo $p^2$ for each prime $p \leq \log x$.  Then, if $n$ has the property that $n+a$ is squarefree for all $a \in A \cap [x]$, it is necessary for $n$ to be congruent to $0 \pmod{p^2}$ for all $p \leq \log x$. In particular, $n$ must be divisible by $\prod_{p \leq \log x} p^2$, so that no element $n \in [x,2x]$ can have this property by the prime number theorem.

It will therefore suffice to prove that, almost surely, $A \cap [x]$ occupies every non-zero residue class modulo $p^2$ for each prime $p \leq \log x$, for all sufficiently large powers of two $x$.  Any given non-zero residue class $b \pmod{p^2}$ avoids $A \cap [x]$ with probability
\begin{align*}
\prod_{\substack{n \in [x] \\ n \in \SF \cap b \hspace{-8pt}\pmod{p^2}}} (1 - \mu_n) &<
\prod_{\substack{\sqrt{x} \leq n \leq x \\ n \in \SF \cap b \hspace{-8pt}\pmod{p^2}}} \exp(- \mu_n) \\
&= \exp\left(-\sum_{\substack{\sqrt{x} \leq n \leq x \\ n \in \SF \cap b \hspace{-8pt}\pmod{p^2}}} \frac{C \log n \log\log n}{n}\right) \\
&\le \exp\left(-\sum_{\substack{\sqrt{x} \leq n \leq x \\ n \in \SF \cap b \hspace{-8pt}\pmod{p^2}}} \frac{C \log x \log\log x}{4n}\right). 
\end{align*}
By standard sieving, the number of elements of $\SF \cap b \pmod{p^2}$ in $[y,2y]$ is $\gg y/p^2$ for any $\sqrt{x} \leq y \leq x$. We can therefore upper bound the above probability by $\exp( -5 \log^2 x \log\log x / p^2 ) \leq \log^{-5} x$, by setting $C$ large enough.  Since the number of non-zero residue classes $b \pmod{p^2}$ is smaller than $p^2 \le \log^2 x$, while the number of primes $p$ is smaller than $\log x$, by the union bound we conclude that $A \cap [x]$ fails to occupy every non-zero residue class modulo $p^2$ for each prime $p \leq \log x$ with probability smaller than $\log^{-2} x$.  By summing the failure probability over all sufficiently large powers of two, claim (ii) follows from the Borel--Cantelli lemma.

\section{Property \texorpdfstring{$\overline{P}$}{P}}\label{overp-sec}

We now prove Theorem \ref{overp}, starting with part (i).  If $A$ obeys either property $\overline{P}$ or $\overline{P}_\infty$, then there exist $n_1 < n_2$ such that $n_1+a, n_2+a \in \SF$ for all but finitely many $a \in A$.  Thus, after deleting finitely many elements of $A$, the remaining elements avoid at least one residue class modulo $p^2$ for all primes $p$, and at least two residue classes modulo $p^2$ for all $p > n_2 - n_1$. From standard sieve bounds we conclude that the upper density of $A$ is strictly less than \eqref{basel}, giving (i).  In fact, by \ref{psum} we obtain a more precise upper bound $\frac{6}{\pi^2} - \frac{c}{(n_2-n_1) \log(n_2-n_1+1)}$ for this density, for some absolute constant $c>0$.

Now we prove (ii).  We need the following lemma:

\begin{lemma} \label{largeP} For any sufficiently large $P \ge 3$, there exist arbitrarily large natural numbers $n$ such that
\begin{itemize}
    \item[(a)] $n \equiv 0 \pmod{p^2}$ whenever $p \leq P$; and
    \item[(b)] $n + a \not \equiv 0 \pmod{p^2}$ whenever $p>P$ and $1 \leq a \leq \frac{p}{(\log\log p)^2}$.
\end{itemize}
\end{lemma}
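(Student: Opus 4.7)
The plan is to find $n$ by a probabilistic argument, choosing $n$ uniformly at random from the multiples of $W \coloneqq \prod_{p \leq P} p^2$ in an interval $[x/2, x]$, where $x$ will be taken large relative to $W$. Since the conclusion asks for arbitrarily large $n$, it suffices to produce one such $n$ in each such interval with $x$ as large as we like. Condition (a) then holds automatically by construction.

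For condition (b), first note that if $p^2 \mid n+a$ with $n \leq x$ and $a \leq p/(\log\log p)^2 \leq p$, then $p^2 \leq n + a \leq 2x$, so we may restrict attention to primes $P < p \leq \sqrt{2x}$. For each such $p$, the modulus $p^2$ is coprime to $W$, so Lemma \ref{freq} tells us that for each fixed $a$, the probability that $n + a \equiv 0 \pmod{p^2}$ is $O\bigl(\tfrac{1}{p^2} + \tfrac{W}{x}\bigr)$. A union bound over $p$ and over $1 \leq a \leq \lfloor p/(\log\log p)^2\rfloor$ then bounds the probability of failure by
\[
\ll \sum_{P < p \leq \sqrt{2x}} \frac{p}{(\log\log p)^2} \left(\frac{1}{p^2} + \frac{W}{x}\right) = \sum_{P < p \leq \sqrt{2x}} \frac{1}{p(\log\log p)^2} + \frac{W}{x}\sum_{P < p \leq \sqrt{2x}} \frac{p}{(\log\log p)^2}.
\]

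The first sum is $O(1/\log\log P)$ by \eqref{plog}, and is independent of $x$; the second is $O\bigl(W/(\log x (\log\log x)^2)\bigr)$ by \eqref{plog-2}, and tends to $0$ as $x \to \infty$ for $W$ fixed. Choosing $P$ large enough to make the first term smaller than $\tfrac{1}{3}$, and then choosing $x$ (depending on $P$, hence on $W$) large enough to make the second term smaller than $\tfrac{1}{3}$, the total failure probability is strictly less than $1$, so a good $n$ exists in $[x/2, x]$. Letting $x \to \infty$ produces arbitrarily large such $n$.

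There is no real obstacle here beyond recognizing that the cutoff $a \leq p/(\log\log p)^2$ in the statement is tailored precisely so that the critical sum $\sum_p \frac{1}{p(\log\log p)^2}$ appearing after the union bound is controlled by the tail estimate \eqref{plog}; a cruder cutoff such as $a \leq p$ would produce the divergent sum $\sum_p \frac{1}{p}$ and this approach would collapse. The only mild book-keeping point is to make sure $x$ is chosen after $P$, so that the $W/x$-type error from Lemma \ref{freq} can be absorbed even though $W$ grows exponentially in $P$.
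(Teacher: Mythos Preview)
Your proof is correct and follows essentially the same route as the paper: define $W=\prod_{p\le P}p^2$, draw $n$ uniformly from the multiples of $W$ in $[x/2,x]$, restrict to primes $P<p\le\sqrt{2x}$, apply Lemma~\ref{freq} and a union bound, and control the two resulting sums via \eqref{plog} and \eqref{plog-2}. Your explicit sequencing (first fix $P$ large, then $x$ large depending on $W$) and your remark that $p^2$ is coprime to $W$ for $p>P$ make the dependencies slightly more transparent than in the paper, but the argument is the same.
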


\begin{proof}  Write $W \coloneqq \prod_{p \leq P} p^2$, and let $x$ be sufficiently large depending on $W$.  We select $n$ uniformly at random from the elements of $[x/2,x]$ that are congruent to $0 \pmod{W}$.  Clearly (a) holds.  Since $n \leq x$, the claim (b) is automatic for $p \geq \sqrt{2x}$ since $n+a < p^2$ for such $p$, for any $a \leq \frac{p}{(\log\log p)^2}$.  By Lemma \ref{freq}, the probability that $p^2$ divides $n+a$ for a given $P < p \leq \sqrt{2x}$ and $a$ is $O(1/p^2 + W/x)$, thus the total failure probability of (b) is
$$ \ll \sum_{P < p \leq \sqrt{2x}} \left(\frac{1}{p (\log\log p)^2} + \frac{W p}{x (\log\log p)^2}\right),$$
which by \eqref{plog}, \eqref{plog-2} can be bounded by
$$ \ll \frac{1}{\log\log P} + \frac{W}{\log x (\log\log x)^2}.$$
For $P$ and $x$ large enough this will be smaller than $1$, giving the claim.
\end{proof}

With $K \geq 3$ sufficiently large (but otherwise arbitrary) we can, by Lemma \ref{largeP}, find an infinite sequence $n_1 < n_2 < \dots$ such that
\begin{itemize}
    \item[(a)] $n_j \equiv 0 \pmod{p^2}$ whenever $p \leq K \exp\exp j$; and
    \item[(b)] $n_j + a \not \equiv 0 \pmod{p^2}$ whenever $p> K\exp\exp j$ and $1 \leq a \leq \frac{p}{(\log\log p)^2}$.
\end{itemize}
Now set $A$ to be the set of all natural numbers $a$ such that $n_j+a \in \SF$ for all $j$.  This set obeys properties $\overline{P}$ and $\overline{P}_\infty$ by definition, so it suffices to show that
$$ |A \cap [x]| \geq \frac{6}{\pi^2}x - o(x) - O\left(\frac{x \log\log K}{K \log K}\right).$$
Indeed, as $K$ is arbitrary one can make $O\left(\frac{x \log\log K}{K \log K}\right)$ smaller than $\epsilon x$ by choosing $K$ sufficiently large.

Suppose that $a \in \SF \cap [x]$ fails to lie in $A$ for some $x \ge 3$. In other words, there must be a prime $p$ and an index $j$ such that $p^2$ divides $n_j+a$.  By property (a) and the squarefree nature of $a$, this forces
$$
p > K \exp\exp j,
$$
so in particular $p > K \ge 3$ and
$$ j < \log\log p.$$
By property (b), this then forces
$$ x \geq a > \frac{p}{(\log\log p)^2}$$
and thus
$$ p \ll x (\log\log x)^2.$$
We conclude that $A$ contains all elements of $\SF \cap [x]$, after removing fewer than $\log\log p$ residue classes modulo $p^2$ for all $K < p \ll x(\log\log x)^2$.  By \eqref{sfx} we get a lower bound of
$$ |A \cap [x]| \geq \frac{6}{\pi^2}x - o(x) - \sum_{K < p \ll x (\log\log x)^2} O\left( \frac{x \log\log p}{p^2} + \log\log p\right)$$
which by \eqref{plog-3} and \eqref{plog-4} simplifies to
\begin{align*}
|A \cap [x]| &\geq \frac{6}{\pi^2}x - o(x) - O\left( \frac{x \log\log K}{K \log K} + \frac{x (\log\log x)^3}{\log x} \right) \\
&= \frac{6}{\pi^2}x  - o(x) - O\left( \frac{x \log\log K}{K \log K}\right),
\end{align*}
finishing the proof.

\section{Size of admissible sets}\label{admis-sec}

In this section we prove Theorem \ref{admis-prop}, beginning with the upper bound.  Applying Theorem \ref{sqsieve} with $\omega(p^2) = 1$ for all $p$, and we get
$$ A(x) \leq \frac{x + Q^4}{\sum_{q \leq Q} h(q)}$$
for any $x, Q \geq 1$, where $h(q) = \mu^2(q) \prod_{p|q} \frac{1}{p^2-1}$.  Since $h(q) = O(1/q^2)$ and 
$$\sum_{q \ge 1} h(q) = \prod_{p \ge 2} \left(1 + \frac{1}{p^2-1}\right) = \zeta(2) = \frac{\pi^2}{6},$$
we have $\sum_{q \leq Q} h(q) = \frac{\pi^2}{6} - O(1/Q)$, and hence
$$ A(x) \leq \frac{6}{\pi^2} x + O\left(\frac{x}{Q} + Q^4\right).$$
Setting $Q \coloneqq x^{1/5}$, we obtain the claim.

Now we turn to the lower bound.  Let $W$ be the product of all primes $p \leq \sqrt{x}$, and let $n$ be chosen uniformly at random from $\Z/W\Z$.  For any shift $a \in \Z$, the probability that $n+a$ is not divisible by any $p^2$ with $p \leq \sqrt{x}$ is exactly
$$ \prod_{p \leq \sqrt{x}} \left(1 - \frac{1}{p^2}\right),$$
thanks to the Chinese remainder theorem.  In particular, by \eqref{basel} and \eqref{psum}, this probability is at least $\frac{6}{\pi^2} + \frac{c}{\sqrt{x} \log x}$ for some absolute constant $c>0$.  Thus, the first moment $\mathbf{E} |A|$ of the size of the random set
$$ A \coloneqq \{ a \in [x]: p^2 \nmid n+a \text{ for all } p \leq \sqrt{x}\}$$
is at least $\frac{6}{\pi^2}x + \frac{c\sqrt{x}}{\log x}$.  Thus we can find an $n$ for which
$$ |A| \geq \frac{6}{\pi^2}x + \frac{c\sqrt{x}}{\log x}.$$
By construction, this set avoids a residue class modulo $p^2$ for all $p \leq \sqrt{x}$. On the other hand, for all $p > \sqrt{x}$ we see that $A \subseteq [x]$ avoids the residue class $\lfloor x \rfloor +1 \pmod{p^2}$. Hence $A$ is admissible, giving the lower bound
$$ A(x) \geq \frac{6}{\pi^2}x + \frac{c\sqrt{x}}{\log x}$$
as required.

\begin{remark}  It seems likely that by calculating higher moments of $|A|$ one could obtain some slight improvements to the lower bound (thereby potentially determining whether $A(x) > |\SF \cap [x]|$ holds for all $x \ge 18$), but we will not perform these calculations here.
\end{remark}

\appendix

\section{A large sieve inequality for square moduli}

We recall the large sieve inequality of Montgomery and Vaughan \cite{mv}:

\begin{theorem} \label{largesieve} Let $M$ and $N \ge 1$ be integers, and let $A$ be a subset of the interval $[M+1,M+N]$ that avoids $\omega(p) < p$ residue classes modulo $p$ for each prime $p$.  Then for any $Q \geq 1$, one has
$$ |A| \leq \frac{N+Q^2}{\sum_{q \leq Q} h(q)}$$
where
$$ h(q) \coloneqq \mu^2(q) \prod_{p|q} \frac{\omega(p)}{p-\omega(p)}.$$
\end{theorem}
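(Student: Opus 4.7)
The plan is to deduce this arithmetic form of the large sieve from its analytic counterpart, due to Montgomery and Vaughan, which states that for any $\delta$-separated points $\alpha_1,\dots,\alpha_R \in \mathbb{R}/\mathbb{Z}$ and any complex sequence $(a_n)_{M+1 \le n \le M+N}$,
\begin{equation*}
\sum_{r=1}^R \left| \sum_{n=M+1}^{M+N} a_n e(n\alpha_r) \right|^2 \le (N + \delta^{-1}) \sum_n |a_n|^2.
\end{equation*}
Applying this with $a_n = \mathbf{1}_{n \in A}$ and with the $\alpha_r$ taken to be the Farey fractions $a/q$ with $1 \le q \le Q$, $1 \le a \le q$, $(a,q)=1$---which are $1/Q^2$-separated, since any two distinct reduced fractions with denominators at most $Q$ satisfy $|a_1/q_1 - a_2/q_2| = |a_1 q_2 - a_2 q_1|/(q_1 q_2) \ge 1/Q^2$---yields
\begin{equation*}
\sum_{q \le Q} \sum_{\substack{1 \le a \le q \\ (a,q)=1}} |S(a/q)|^2 \le (N+Q^2) |A|, \qquad S(\alpha) := \sum_{n \in A} e(n\alpha).
\end{equation*}

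The second, genuinely arithmetic, ingredient is the local-to-global inequality
\begin{equation*}
h(q) |A|^2 \le \sum_{\substack{1 \le a \le q \\ (a,q)=1}} |S(a/q)|^2 \qquad \text{for every squarefree } q \le Q.
\end{equation*}
Summed over $q \le Q$ (with $h(q) = 0$ on non-squarefree $q$ by the $\mu^2(q)$ factor), this combines with the preceding display to produce $|A|^2 \sum_{q \le Q} h(q) \le (N+Q^2) |A|$, which, upon dividing by $|A|$, is exactly the statement of the theorem.

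To verify the local-to-global inequality, one begins with the prime case. Parseval applied to the $p$ sums $S(0/p), S(1/p), \dots, S((p-1)/p)$ yields $\sum_{a=0}^{p-1} |S(a/p)|^2 = p \sum_{b \pmod p} Z(b,p)^2$, where $Z(b,p) := |\{n \in A : n \equiv b \pmod p\}|$. Since $A$ is supported on at most $p - \omega(p)$ residue classes mod $p$, Cauchy--Schwarz gives $\sum_b Z(b,p)^2 \ge |A|^2/(p-\omega(p))$, whence subtracting off the $a=0$ term $|S(0)|^2 = |A|^2$ yields $\sum_{a=1}^{p-1} |S(a/p)|^2 \ge h(p)|A|^2$. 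The general squarefree case follows by a standard multiplicative extension of this argument, using the Chinese remainder theorem to identify primitive Farey fractions modulo $q$ with tuples of primitive Farey fractions at each prime divisor of $q$, together with the multiplicativity $h(q) = \prod_{p|q} h(p)$; details of this classical reduction can be found in \cite{mv}. The only substantive input is the sharp analytic large sieve of Montgomery and Vaughan, which we invoke as a black box; everything else is classical combinatorics and presents no further obstacle.
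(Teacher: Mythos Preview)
The paper does not actually prove this theorem; it simply records it as a known result, writing ``this is \cite[Corollary~1]{mv} after replacing \cite[(1.6)]{mv} in the proof of that corollary with \cite[(1.4)]{mv}.'' Your sketch is a correct reconstruction of that classical argument: apply the sharp analytic large sieve of Montgomery--Vaughan to the Farey fractions of level $Q$ (which are $1/Q^2$-separated), establish the pointwise bound $\sum_{(a,q)=1}|S(a/q)|^2 \ge h(q)|S(0)|^2$ for each squarefree $q$ by doing the prime case via Parseval and Cauchy--Schwarz and then lifting multiplicatively by the Chinese remainder theorem, and sum. (One should note the trivial case $|A|=0$ before dividing through by $|A|$, but this is cosmetic.)

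It is worth observing that your argument exactly parallels the paper's proof of the companion result Theorem~\ref{sqsieve}: there too the key inequality \eqref{qsum} is proved first at a single prime (via Plancherel and Cauchy--Schwarz applied to a mean-zero weight $\nu$, which is equivalent to your Parseval/Cauchy--Schwarz step), extended multiplicatively by CRT, and then fed into the analytic large sieve of \cite{mv}. So your approach is not merely correct but is the same template the paper itself adopts when it does write out a proof.
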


Indeed, this is \cite[Corollary 1]{mv} after replacing \cite[(1.6)]{mv} in the proof of that corollary with \cite[(1.4)]{mv}. In this appendix we prove the following variant to Theorem \ref{largesieve}:

\begin{theorem}\label{sqsieve}  Let $M$ and $N \ge 1$ be integers, and let $A$ be a subset of the interval $[M+1,M+N]$ that avoids $\omega(p^2) < p^2$ residue classes modulo $p^2$ for each prime $p$.  Then for any $Q \geq 1$, one has
$$ |A| \leq \frac{N+Q^4}{\sum_{q \leq Q} h(q)}$$
where
$$ h(q) \coloneqq \mu^2(q) \prod_{p|q} \frac{\omega(p^2)}{p^2-\omega(p^2)}.$$
\end{theorem}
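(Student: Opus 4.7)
\emph{Proof plan.} The plan is to deduce Theorem \ref{sqsieve} from the analytic form of the Montgomery--Vaughan large sieve in precisely the same way that \cite[Corollary 1]{mv} (Theorem \ref{largesieve}) is obtained, but with moduli $r$ replaced by squared moduli $r=q^2$ throughout, and with the prime modulus $p$ replaced by $p^2$ in the Gallagher-type lower bound step. Specifically, I would start from the analytic large sieve inequality
$$\sum_{r\le R}\,\sideset{}{^*}\sum_{a\,\mathrm{mod}\,r}\Bigl|\sum_{n}c_n\, e(an/r)\Bigr|^2 \;\le\; (N+R^2)\sum_n |c_n|^2$$
(valid for any sequence $(c_n)$ supported on $[M+1,M+N]$, with $\sideset{}{^*}\sum$ denoting a sum over reduced residues modulo $r$), apply it with $c_n=\mathbf{1}_{n\in A}$ and $R=Q^2$, and then restrict the sum on the left to those $r$ of the form $r=q^2$ with $q\le Q$ squarefree. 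This restriction is a valid majorization since every term on the left is nonnegative, and the right-hand side becomes $(N+Q^4)|A|$.

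Next, for each squarefree $q\le Q$, the key step is to establish the lower bound
$$G(q^2) \;\coloneqq\; \sideset{}{^*}\sum_{a\,\mathrm{mod}\,q^2}\Bigl|\sum_{n\in A} e(an/q^2)\Bigr|^2 \;\ge\; |A|^2\,h(q).$$
To do this, one first observes that by the Chinese remainder theorem, $\Z/q^2\Z \cong \prod_{p\mid q} \Z/p^2\Z$, so the hypothesis that $A$ avoids $\omega(p^2)$ residue classes modulo $p^2$ for each prime $p$ implies that $A$ is contained in at most $\prod_{p\mid q}(p^2-\omega(p^2))$ residue classes modulo $q^2$. Letting $N(b)\coloneqq|\{n\in A:n\equiv b\pmod{q^2}\}|$, Cauchy--Schwarz yields $\sum_b N(b)^2 \ge |A|^2\big/\prod_{p\mid q}(p^2-\omega(p^2))$, and the Parseval identity on $\Z/q^2\Z$ converts this into a lower bound on $\sum_{a\,\mathrm{mod}\,q^2}|S(a/q^2)|^2$ where $S(\alpha)=\sum_{n\in A}e(\alpha n)$. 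A Möbius-inversion step (over divisors $d\mid q$, to restrict $a$ from \emph{all} classes modulo $q^2$ to the \emph{reduced} classes) then multiplicatively distributes the contribution over the primes $p\mid q$, producing precisely the factor $\omega(p^2)/(p^2-\omega(p^2))$ at each such prime and hence the claimed $h(q)$.

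Summing $G(q^2)\ge |A|^2 h(q)$ over squarefree $q\le Q$ and combining with the restricted analytic inequality yields $|A|^2\sum_{q\le Q}h(q)\le (N+Q^4)|A|$, and dividing by $|A|\sum_q h(q)$ gives the claimed bound. The only genuinely delicate point is the Möbius-inversion step in the lower bound on $G(q^2)$: this is the same point where the Montgomery--Vaughan proof of Corollary 1 in \cite{mv} requires care, and the calculation carries over verbatim with $p$ replaced by $p^2$ since the Chinese remainder theorem and the Parseval identity behave identically whether one works modulo $q$ or modulo $q^2$ when $q$ is squarefree. The only place where the substitution $q\leadsto q^2$ materially affects the final bound is in the main term $N+R^2$ of the analytic large sieve, where the effective range of moduli is $q^2\le Q^2$, and this is what produces the $Q^4$ in the statement in place of the $Q^2$ of Theorem \ref{largesieve}.
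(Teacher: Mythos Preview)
Your plan has a genuine gap at the step where you claim
\[
G(q^2) \;=\; \sideset{}{^*}\sum_{a\bmod q^2}\bigl|S(a/q^2)\bigr|^2 \;\ge\; |A|^2\,h(q).
\]
This inequality is false in general, already for $q=p$ prime. The difficulty is that for a prime modulus $p$ one has ``nonzero mod $p$'' $=$ ``coprime to $p$'', so the Gallagher lower bound automatically lands on reduced residues; but ``nonzero mod $p^2$'' is strictly larger than ``coprime to $p^2$'', and those extra $p-1$ terms (the fractions $a/p$ with $1\le a<p$) can carry the entire contribution. Concretely, take $\omega(p^2)=p^2-p$ and let $A$ be any set of $p$ integers lying in the single residue class $0\pmod p$ but in $p$ distinct classes mod $p^2$. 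Then $S(a/p^2)=0$ whenever $p\nmid a$, so $G(p^2)=0$, while $h(p)|A|^2=(p-1)p^2>0$. (Less extremely, with $\omega(p^2)=p^2-1$ and $A$ in a single class mod $p^2$, one gets $G(p^2)=(p^2-p)|A|^2<(p^2-1)|A|^2=h(p)|A|^2$.) No M\"obius-inversion step can rescue this: passing from $\sum_{a\bmod p^2}$ to $\sideset{}{^*}\sum_{a\bmod p^2}$ amounts to subtracting $\sum_{a\bmod p}|S(a/p)|^2$, and one would need an \emph{upper} bound on that sum, which the avoidance hypothesis does not provide.

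The paper's fix is exactly to keep those missing terms. It proves the translation-invariant inequality
\[
\sum_{q\mid q'\mid q^2}\ \sideset{}{^*}\sum_{a\bmod q'}\bigl|S(a/q'+\alpha)\bigr|^2 \;\ge\; h(q)\,|S(\alpha)|^2
\]
for squarefree $q$; for $q=p$ the left side is $\sum_{1\le a<p^2}|S(a/p^2+\alpha)|^2$ (all nonzero $a$, not just reduced ones), and the CRT/iteration argument then works. Summed over squarefree $q\le Q$, the fractions $a/q'$ appearing are still distinct (since $q$ is recovered as the radical of $q'$) and have denominators $q'\le Q^2$, so the analytic large sieve with $\delta^{-1}=Q^4$ applies as you intended. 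In short, your overall architecture is right, but you must enlarge the set of sampled fractions from $\{a/q^2:(a,q)=1\}$ to $\{a/q':(a,q')=1,\ q\mid q'\mid q^2\}$ before the lower bound step goes through.
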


This result was essentially established in \cite{gallagher} (with a slightly different bound) under the additional assumption that the $\omega(p^2)$ residue classes removed had a tree structure, coming from first removing some residue classes modulo $p$, and then removing the same number of residue classes modulo $p^2$ for each surviving residue class modulo $p$.  However, we cannot assume this tree structure in our applications, so we need Theorem \ref{sqsieve} as a substitute.  One could also generalize this result to higher powers than squares, with the obvious modifications to the statement of the theorem; we leave this to the interested reader.

\begin{proof} Let $a_n$ be any sequence of complex numbers supported on $A$, and form the exponential sums $S(\alpha) = \sum_{n \in A} a_n e(n\alpha)$ with $e(\alpha) \coloneqq e^{2\pi i \alpha}$.  We then claim the inequality
\begin{equation}\label{qsum}
\sum_{\substack{q|q' \\ q'|q^2}} \sum_{\substack{1 \leq a \leq q' \\ {(a,q')=1}}} \left| S\left(\frac{a}{q'}+\alpha\right)\right|^2 \geq h(q) |S(\alpha)|^2
\end{equation}
for any squarefree $q$ and any $\alpha \in \R$.  By the usual Chinese remainder theorem argument (see, e.g., \cite[p. 493]{gallagher}), if this inequality is proven for $q=q_1$ and $q=q_2$ for some coprime $q_1,q_2$, then it also holds for $q=q_1q_2$.  As one can check \ref{qsum} by hand if $q = 1$, it therefore suffices to verify the inequality for $q=p$ a prime, in which case the outer sum only contains the two terms corresponding to $q' = p$ and $q' = p^2$.  By modulating the sequence $a_n$ by $e(n\alpha)$ we may furthermore normalize $\alpha=0$, so that we get
\begin{align*}
\sum_{\substack{q|q' \\ q'|q^2}} \sum_{\substack{1 \leq a \leq q' \\ {(a,q')=1}}} \left| S\left(\frac{a}{q'}+\alpha\right)\right|^2 &= \sum_{1 \leq a < p} \left| S\left(\frac{a}{p}\right)\right|^2 + \sum_{\substack{1 \leq a < p^2 \\ a \nmid p}} \left| S\left(\frac{a}{p^2}\right)\right|^2 \\
&= \sum_{1 \leq a < p^2} \left| S\left(\frac{a}{p^2}\right)\right|^2.
\end{align*}
Since $h(p) = \frac{\omega(p^2)}{p^2-\omega(p^2)}$, in order to deduce \ref{qsum} it remains to prove
\begin{equation} \label{qsumsimplified} 
\sum_{1 \leq a < p^2} \left| S\left(\frac{a}{p^2}\right)\right|^2 \geq \frac{\omega(p^2)}{p^2-\omega(p^2)} |S(0)|^2.
\end{equation}

Now, if we let $\nu(n) \coloneqq \omega(p^2) - p^2 1_S$, where $S$ is the union of the $\omega(p^2)$ residue classes modulo $p^2$ one is removing, then $\nu(n)$ is $p^2$-periodic and of mean zero, hence admits a Fourier expansion
$$ \nu(n) = \sum_{1 \leq a < p^2} c_a e(an/p^2)$$
for some coefficients $c_a$.  By the Plancherel identity and a brief calculation one has
$$ \sum_{1 \leq a < p^2} |c_a|^2 = (p^2 - \omega(p^2)) \omega(p^2),$$
and so by Cauchy--Schwarz we have
$$ \left|\sum_{1 \leq a < p^2} c_a S\left(\frac{a}{p^2}\right)\right|^2 \leq (p^2 - \omega(p^2)) \omega(p^2) \sum_{1 \leq a < p^2} \left| S\left(\frac{a}{p^2}\right)\right|^2.$$
However, the left-hand side can be rewritten as
$$ \left|\sum_{n \in A} \nu(n) a_n\right|^2 = \omega(p^2)^2 |S(0)|^2$$
so that \ref{qsumsimplified} and hence \ref{qsum} follow.

Summing \eqref{qsum} with $\alpha=0$ and squarefree $q \leq Q$, and $a_n = 1_A(n)$, we conclude that
$$ \sum_{q \leq Q} \mu^2(q) \sum_{\substack{q|q' \\ q'|q^2}} \sum_{\substack{1 \leq a \leq q' \\ (a,q')=1}} \left| S\left(\frac{a}{q'}\right)\right|^2 \geq |A| \sum_{q \leq Q} h(q).$$
The fractions $\frac{a}{q'}$ in the sum on the left-hand side are $\frac{1}{Q^4}$-separated, and so by the arithmetic large sieve inequality \cite[Theorem 1]{mv} one has
$$ \sum_{q \leq Q} \mu^2(q) \sum_{\substack{q|q' \\ q'|q^2}} \sum_{\substack{1 \leq a \leq q' \\ (a,q')=1}} \left| S\left(\frac{a}{q'}\right)\right|^2  \leq N + Q^4,$$
finishing the proof.
\end{proof}

\bibliographystyle{plainurl}
\bibliography{1102}

@misc{ER81,
    author = {Erd{\H{o}}s, Paul},
    title = {Some problems and results on additive and multiplicative number theory},
    year = {1981},
    language = {English},
    howpublished = {Analytic number theory, {Proc}. {Conf}., {Temple} {Univ}./{Phila}., {Lect}. {Notes} {Math}. 899, 171-182},
    keywords = {11-02,11N05,11B83,11B75,00A07},
    zbMATH = {3739642},
    Zbl = {0472.10002}
}

@misc{EP,
    author = {Bloom, Thomas F.},
    title = {{Erd\H{o}s} {P}roblems},
    URL = {https://www.erdosproblems.com},
    note = {Accessed: November 3, 2025}
}

@article{taoziegler,
 author = {Tao, Terence and Ziegler, Tamar},
 title = {Infinite partial sumsets in the primes},
 fjournal = {Journal d'Analyse Math{\'e}matique},
 journal = {J. Anal. Math.},
 issn = {0021-7670},
 volume = {151},
 number = {1},
 pages = {375--389},
 year = {2023},
 language = {English},
 doi = {10.1007/s11854-023-0323-y},
 keywords = {11N05,11P32,11N36,37A44},
 zbMATH = {7793431},
 Zbl = {1543.11083}
}

@article{gallagher,
 author = {Gallagher, Patrick X.},
 title = {Sieving by prime powers},
 fjournal = {Acta Arithmetica},
 journal = {Acta Arith.},
 issn = {0065-1036},
 volume = {24},
 pages = {491--497},
 year = {1974},
 language = {English},
 doi = {10.4064/aa-24-5-491-497},
 keywords = {11N35},
 url = {https://eudml.org/doc/205243},
 zbMATH = {3433958},
 Zbl = {0276.10026}
}

@article{liu,
 author = {Liu, H.-Q.},
 title = {On the distribution of squarefree numbers},
 fjournal = {Journal of Number Theory},
 journal = {J. Number Theory},
 issn = {0022-314X},
 volume = {159},
 pages = {202--222},
 year = {2016},
 language = {English},
 doi = {10.1016/j.jnt.2015.07.013},
 keywords = {11L07,11B83},
 zbMATH = {6497375},
 Zbl = {1338.11079}
}

@misc{walfisz,
 author = {Walfisz, Arnold},
 title = {Weylsche {Exponentialsummen} in der neueren {Zahlentheorie}},
 year = {1963},
 language = {German},
 howpublished = {Mathematische {Forschungsberichte}. 16. {Berlin}: {VEB} {Deutscher} {Verlag} der {Wissenschaften}. 231 {S}. (1963).},
 keywords = {11L15,11L03,11-02,11M06,11N37,11N05},
 zbMATH = {3235403},
 Zbl = {0146.06003}
}

@article {filaseta,
    AUTHOR = {Filaseta, Michael},
     TITLE = {On the distribution of gaps between squarefree numbers},
   JOURNAL = {Mathematika},
  FJOURNAL = {Mathematika. A Journal of Pure and Applied Mathematics},
    VOLUME = {40},
      YEAR = {1993},
    NUMBER = {1},
     PAGES = {88--101},
      ISSN = {0025-5793},
   MRCLASS = {11N25},
  MRNUMBER = {1239133},
MRREVIEWER = {John\ B.\ Friedlander},
       DOI = {10.1112/S0025579300013735},
       URL = {https://doi.org/10.1112/S0025579300013735},
}

@article {chan,
    AUTHOR = {Chan, Tsz Ho},
     TITLE = {On moments of gaps between consecutive square-free numbers},
   JOURNAL = {Mosc. J. Comb. Number Theory},
  FJOURNAL = {Moscow Journal of Combinatorics and Number Theory},
    VOLUME = {12},
      YEAR = {2023},
    NUMBER = {4},
     PAGES = {287--295},
      ISSN = {2220-5438,2640-7361},
   MRCLASS = {11N25},
  MRNUMBER = {4685138},
MRREVIEWER = {Mario\ Pineda-Ruelas},
       DOI = {10.2140/moscow.2023.12.287},
       URL = {https://doi.org/10.2140/moscow.2023.12.287},
}

@article{bennett,
  author  = {Bennett, George},
  title   = {Probability Inequalities for the Sum of Independent Random Variables},
  journal = {Journal of the American Statistical Association},
  volume  = {57},
  number  = {297},
  pages   = {33--45},
  year    = {1962},
}

@article {mv,
    AUTHOR = {Montgomery, Hugh L. and Vaughan, Robert C.},
     TITLE = {The large sieve},
   JOURNAL = {Mathematika},
  FJOURNAL = {Mathematika. A Journal of Pure and Applied Mathematics},
    VOLUME = {20},
      YEAR = {1973},
     PAGES = {119--134},
      ISSN = {0025-5793},
   MRCLASS = {10H30},
  MRNUMBER = {374060},
MRREVIEWER = {J.\ W.\ Porter},
       DOI = {10.1112/S0025579300004708},
       URL = {https://doi.org/10.1112/S0025579300004708},
}

@article {konyagin,
    AUTHOR = {Konyagin, Sergei V.},
     TITLE = {Problems of the set of square-free numbers},
   JOURNAL = {Izv. Ross. Akad. Nauk Ser. Mat.},
  FJOURNAL = {Izvestiya Rossiiskoi Akademii Nauk. Seriya Matematicheskaya},
    VOLUME = {68},
      YEAR = {2004},
    NUMBER = {3},
     PAGES = {63--90},
      ISSN = {1607-0046,2587-5906},
   MRCLASS = {11N56 (11N36)},
  MRNUMBER = {2069194},
MRREVIEWER = {Ognian\ Trifonov},
       DOI = {10.1070/IM2004v068n03ABEH000486},
       URL = {https://doi.org/10.1070/IM2004v068n03ABEH000486},
}

@article {erdos-sarkozy,
    AUTHOR = {Erd{\H{o}}s, Paul and S\'ark\"ozy, András},
     TITLE = {On divisibility properties of integers of the form {$a+a'$}},
   JOURNAL = {Acta Math. Hungar.},
  FJOURNAL = {Acta Mathematica Hungarica},
    VOLUME = {50},
      YEAR = {1987},
    NUMBER = {1-2},
     PAGES = {117--122},
      ISSN = {0236-5294,1588-2632},
   MRCLASS = {11P16 (11N25)},
  MRNUMBER = {893251},
MRREVIEWER = {H.\ L.\ Abbott},
       DOI = {10.1007/BF01903370},
       URL = {https://doi.org/10.1007/BF01903370},
}

@article {gyarmati,
    AUTHOR = {Gyarmati, Katalin},
     TITLE = {On divisibility properties of integers of the form {$ab+1$}},
   JOURNAL = {Period. Math. Hungar.},
  FJOURNAL = {Periodica Mathematica Hungarica. Journal of the J\'anos Bolyai
              Mathematical Society},
    VOLUME = {43},
      YEAR = {2001},
    NUMBER = {1-2},
     PAGES = {71--79},
      ISSN = {0031-5303,1588-2829},
   MRCLASS = {11B75 (11N36)},
  MRNUMBER = {1830566},
MRREVIEWER = {Jie\ Wu},
       DOI = {10.1023/A:1015229531017},
       URL = {https://doi.org/10.1023/A:1015229531017},
}

@article {sarkozy,
    AUTHOR = {S\'ark\"ozy, Gábor N.},
     TITLE = {On a problem of {P}. {Erd\H{o}s}},
   JOURNAL = {Acta Math. Hungar.},
  FJOURNAL = {Acta Mathematica Hungarica},
    VOLUME = {60},
      YEAR = {1992},
    NUMBER = {3-4},
     PAGES = {271--282},
      ISSN = {0236-5294,1588-2632},
   MRCLASS = {11N36 (11A25)},
  MRNUMBER = {1177255},
MRREVIEWER = {Paula\ A.\ Kemp},
       DOI = {10.1007/BF00051645},
       URL = {https://doi.org/10.1007/BF00051645},
}

@incollection {lacampagne,
    AUTHOR = {Lacampagne, Carole B. and Nicol, Charles A. and Selfridge, John L.},
     TITLE = {Sets with nonsquarefree sums},
 BOOKTITLE = {Number theory ({B}anff, {AB}, 1988)},
     PAGES = {299--311},
 PUBLISHER = {de Gruyter, Berlin},
      YEAR = {1990},
      ISBN = {3-11-011723-1},
   MRCLASS = {11B34 (11B13 11B83)},
  MRNUMBER = {1106669},
MRREVIEWER = {S.\ W.\ Graham},
}

@incollection {nathanson,
    AUTHOR = {Nathanson, Melvyn B.},
     TITLE = {Sumsets containing {$k$}-free integers},
 BOOKTITLE = {Number theory ({U}lm, 1987)},
    SERIES = {Lecture Notes in Math.},
    VOLUME = {1380},
     PAGES = {179--184},
 PUBLISHER = {Springer, New York},
      YEAR = {1989},
      ISBN = {0-387-51397-3},
   MRCLASS = {11B13},
  MRNUMBER = {1009800},
       DOI = {10.1007/BFb0086552},
       URL = {https://doi.org/10.1007/BFb0086552},
}

@article {schoen,
    AUTHOR = {Schoen, Tomasz},
     TITLE = {Squarefree numbers in sumsets},
   JOURNAL = {Indag. Math. (N.S.)},
  FJOURNAL = {Koninklijke Nederlandse Akademie van Wetenschappen.
              Indagationes Mathematicae. New Series},
    VOLUME = {16},
      YEAR = {2005},
    NUMBER = {2},
     PAGES = {251--265},
      ISSN = {0019-3577,1872-6100},
   MRCLASS = {11P70 (11B50 11B75)},
  MRNUMBER = {2319298},
MRREVIEWER = {David\ J.\ Grynkiewicz},
       DOI = {10.1016/S0019-3577(05)80027-8},
       URL = {https://doi.org/10.1016/S0019-3577(05)80027-8},
}

\end{document}